\newtheorem{theorem}{Theorem}
\newtheorem{remark}{Remark}
\newtheorem{lemma}{Lemma}
\newtheorem{proposition}{Proposition}
\newtheorem{proof}{Proof}
\newcommand{\bb}{\mathbb}
\def\qed{\hfill $\Box$} 
\begin{document}
%
\title{Reduced model reconstruction method for\\ stable positive network systems}
%
%
%

\author{Kazuhiro~Sato
\thanks{K. Sato is with the Department of Mathematical Informatics, 
Graduate School of Information Science and Technology, The University of Tokyo,
 Tokyo 113-8656, Japan,
email: kazuhiro@mist.i.u-tokyo.ac.jp}
}

\maketitle
\thispagestyle{empty}
\pagestyle{empty}

\begin{abstract}
We consider a reconstruction problem of a reduced stable positive network system with the preservation of the original interconnection structure based on an $H^2$ optimal model reduction problem with constraints.
To this end, we define an important set using the Perron--Frobenius theory of nonnegative matrices such that
all elements of the set are stable and Metzler.
Using the projection onto the set,
we propose a cyclic projected gradient method to produce a better reduced model than an initial reduced model in the sense of the $H^2$ norm.
In the method,
we use Lipschitz constants of the gradients of our objective function to define
the step sizes without a line search method whose computational complexity is large.
Moreover, the existence of the Lipschitz constants guarantees the global convergence  of our proposed algorithm to a stationary point.
 The numerical experiments demonstrate that the proposed algorithm improves a given reduced model, and can be used for large-scale systems.
\end{abstract}

\begin{IEEEkeywords}
Model reduction, network system, optimization, positive system
\end{IEEEkeywords}

%
\IEEEpeerreviewmaketitle

\section{Introduction} \label{Sec1}
%
%
%
%
\IEEEPARstart{M}{odel} reduction of stable positive network systems is one of the most important {topics}, because
the systems can model biological systems \cite{bernstein1993compartmental, gu2015controllability, hernandez2011discrete, yan2017network}, interconnected systems \cite{ebihara2017analysis, haddad2010nonnegative}, and multi-agent systems \cite{mesbahi2010graph}.
In particular, the preservation of the original network interconnection structure and positivity is useful to provide a physical interpretation for reduced systems.
For {this} reason, the reduction methods of preserving them have been proposed based on clustering methods \cite{besselink2016clustering, cheng2020clustering, cheng2021model, ishizaki2014model, monshizadeh2014projection} and Kron reduction methods \cite{dorfler2013kron, sugiyama2022kron}.
Although these methods preserve them, they do not guarantee the $H^2$-optimality.
That is, it may be possible to improve reduced systems obtained by the methods in the sense of the $H^2$ norm.

For improving the $H^2$ norm performance {of a reduced stable positive network system, to the best of our knowledge, only}
\cite{misawa2022h}
proposed a reconstruction method using {the Riemannian augmented Lagrangian method (RALM)} with equality and inequality constraints.
More concretely, in \cite{misawa2022h},
a Riemannian manifold constraint introduced in \cite{sato2019riemannian} is used to always guarantee
 the stability of reduced systems.
 Furthermore, equality and inequality constraints in \cite{misawa2022h} aim to preserve the positive property and interconnection structure of original systems.
However, the algorithm proposed in \cite{misawa2022h}
may not always produce a positive reduced system with the original interconnection structure, because 
it is based on the augmented Lagrangian method \cite{liu2019simple}, which may generate an infeasible solution.

To always obtain a stable positive reduced network system with the original interconnection structure,
in this paper, we formulate a novel optimization problem for developing an effective reconstruction method.
The set of feasible solutions to the novel problem is narrower than that of the problem in \cite{misawa2022h}.
However,
the feasible set in this paper is preferable compared with that of \cite{misawa2022h} for reducing large-scale systems,
because 
the projection onto the set
can be easily calculated.
In fact, using the projection, we propose a cyclic projected gradient method that always generates a feasible solution unlike the method developed in \cite{misawa2022h} and provides a better reduced model than an initial reduced model in the sense of the $H^2$ norm.

The contributions of this paper are summarized as follows.
\begin{itemize}
\item[(i)]
 Using an initial reduced state transition matrix with the stability and Metzler properties,
we define an important set based on the Perron--Frobenius theory of nonnegative matrices.
We prove that all elements of the set are stable and Metzler.

\item[(ii)]
Using the projection onto the set as mentioned in (i),
we propose a cyclic projected gradient method to produce a better stable positive reduced network system of preserving the original interconnection structure than an initial reduced model in the sense of the $H^2$ norm. 
To this end, we derive three Lipschitz constants of the gradients of our objective function in terms of three variables, and use
those to define
step sizes without a line search method.
This is practically important for reducing large-scale systems, because the computational complexity of a line search method for determining step sizes is large, as explained in Remark \ref{remark_line_search} in Section \ref{Sec5}.
Moreover, we show the global convergence property of our proposed algorithm to a stationary point of our optimization problem {and the effectiveness of the algorithm with comparisons to the RALM-based reduction method proposed in \cite{misawa2022h}.}
\end{itemize}

The remainder of this paper is organized as follows.
In Section \ref{Sec2}, we describe assumptions in this paper, define an initial reduced stable positive network model, explain the optimization problem considered in 
\cite{misawa2022h},
and point out the difficulty of the problem.
In Section \ref{Sec3}, we 
define the important set based on the Perron--Frobenius theory of nonnegative matrices and formulate a novel optimization problem using the set.
In Section \ref{Sec4}, we
derive three Lipschitz constants of the gradients of our objective function with respect to three variables.
In Section \ref{Sec5}, we
propose a cyclic projected gradient method for solving our optimization problem and prove the global convergence to a stationary point of the problem.
In Section \ref{Sec6}, we demonstrate the effectiveness of the proposed method.
Finally, our conclusions and extendability to semi-stable systems are presented in Section \ref{Sec7}.

{\it Notation:} 
 The sets of real, nonnegative, and complex numbers are denoted by ${\bb R}$, ${\bb R}_{\geq 0}$, and ${\bb C}$, respectively.
 For matrices $A=(a_{ij}), B=(b_{ij})\in {\bb R}^{m\times n}$, $A\leq B$ means that for all $(i,j)$, $a_{ij}\leq b_{ij}$ holds.
For a matrix $A\in {\bb R}^{m\times n}$, $\|A\|_{\rm F}$ denotes the Frobenius norm of $A$; i.e.,
$\|A\|_{\rm F}:=\sqrt{{\rm tr}(A^{\top}A)}$,
where
the superscript $A^\top$ denotes the transpose of $A$, and ${\rm tr}(M)$ denotes the sum of the diagonal elements of {a square matrix} $M$.
Given a vector $v\in {\bb R}^n$, $\|v\|_2$ denotes the usual Euclidean norm.
The $L^2$ space on $\mathbb{R}^m$ is denoted by $L^2({\mathbb R}^m)$ with the norm $\|f\|_{L^2}:= \sqrt{\int_0^{\infty} \|f(t)\|_2^2 dt}$, where $f:{\bb R}_{\geq 0}\rightarrow \mathbb {R}^m$ is a measurable function.
For a matrix $G(s)\in {\bb C}^{p\times m}$ without poles in the closed right half-plane in ${\bb C}$, the $H^2$ norm of $G$ is defined as
$\|G\|_{H^2} := \sqrt{ \frac{1}{2\pi} \int_{-\infty}^{\infty} \|G({\rm i}\omega)\|_{\rm F}^2 d\omega }$,
where ${\rm i}$ is the imaginary unit.
The symbol $I_n\in {\bb R}^{n\times n}$ denotes the identity matrix. 



\section{Preliminaries} \label{Sec2}

\subsection{Assumptions}

The original large-scale network system in this paper is modeled as 
\begin{align}
\begin{cases}
\dot{x}(t) = Ax(t) + Bu(t), \\
y(t) = Cx(t),
\end{cases} \label{original}
\end{align}
 with the state $x(t)\in {\bb R}^n$, input $u(t)\in {\bb R}^m$, output $y(t)\in {\bb R}^p$, and appropriate size constant real matrices $A, B, C$.
For system \eqref{original},
we impose the following assumptions:
\begin{enumerate}
    \item The matrix $A$ is  stable.
That is, the real parts of all the eigenvalues of matrix $A$ are negative. 
In this case, system (\ref{original}) is called asymptotically stable.

\item The matrix $A$ is a Metzler matrix, which means that
  every off-diagonal entry of $A$ is nonnegative. Moreover, the matrices $B$ and $C$ are nonnegative.
That is, not only the output $y(t)$ but also the state $x(t)$ is nonnegative with the nonnegative input $u(t)$ and initial state $x(0)$.
\end{enumerate}

We call system \eqref{original} with the assumptions 1) and 2) an asymptotically stable positive network, which we abbreviate ASPN throughout this paper.
Moreover, we denote the original network graph by $\mathcal{G}=(\mathcal{V}, \mathcal{E})$,
where $\mathcal{V}:=\{1,\ldots, n\}$ is the node set and $\mathcal{E}\subset \mathcal{V}\times \mathcal{V}$ is the edge set defined by
the nonzero entries of the matrix $A$.
{Note that $\mathcal{G}$ may be a directed graph with self-loops.
Furthermore, note that
ASPN \eqref{original} can be regarded as a linearized system of nonlinear positive network systems such as biological systems \cite{bernstein1993compartmental, gu2015controllability, hernandez2011discrete, yan2017network}, interconnected systems \cite{ebihara2017analysis, haddad2010nonnegative}, and multi-agent systems \cite{mesbahi2010graph}, as shown in Proposition 2.10 in \cite{haddad2010nonnegative}.}

\subsection{Initial Reduced Network Model} \label{Sec2-B}

We divide the node set $\mathcal{V}$ of the original graph $\mathcal{G}$ into $r$, which is greatly smaller than $n$, nonempty and disjoint subsets denoted by $\mathcal{C}_1,\ldots, \mathcal{C}_r$ called clusters of $\mathcal{G}$.
Then, we define
the characteristic matrix $\Pi \in \mathbb{R}^{n\times r}$ of $\mathcal{C}_1,\ldots, \mathcal{C}_r$ as the binary matrix
\begin{align*}
    \Pi_{ij} :=\left\{
    \begin{array}{ll}
        1, & \text{if } i \in \mathcal{C}_j,\\
        0, & \text{otherwise}.
    \end{array}
    \right. 
\end{align*}

Using the characteristic matrix $\Pi$, we define the initial reduced system of the original system (\ref{original}) as
\begin{align}
  \label{reduced_initial}
  \begin{cases}
    \dot{x}_r (t) = A_r^{(0)} x_r (t) + B_r^{(0)} u(t)\\
    y_r (t) = C_r^{(0)} x_r (t), 
  \end{cases}
\end{align}
where $x_r(t)\in \mathbb{R}^r$, $y_r(t)\in \mathbb{R}^p$, 
and 
\begin{align}
\label{shokiten}
\begin{cases}
   A_r^{(0)} := (\Pi^\top\Pi)^{-1}\Pi^\top A \Pi - \alpha I_r, \\
    B_r^{(0)} := (\Pi^\top\Pi)^{-1}\Pi^\top B,\\
    C_r^{(0)} := C\Pi
\end{cases}
 \end{align}
with $\alpha\geq 0$.
Note that $\Pi^\top \Pi$ is invertible, because $\Pi^\top \Pi$ is a diagonal matrix whose diagonal elements are the number of nodes which compose the each cluster.
The matrix $A_r^{(0)}$ is Metzler for any $\alpha\geq 0$.
However, because the matrix $A_r^{(0)}$ is not {always} stable as shown in \cite{misawa2022h},
we need to choose $\alpha$ such that the resulting matrix $A_r^{(0)}$ is stable.
{It should be noted that if $A$ is a Laplacian matrix,
$A_r=(\Pi^\top\Pi)^{-1}\Pi^\top A \Pi$ is also a Laplacian matrix \cite{cheng2020clustering}.
That is, $A$ and $A_r$ are both semi-stable,
where system \eqref{original} is called semi-stable if
the zero eigenvalues of $A$ are semisimple and the real parts of all the other eigenvalues are negative.
In Section \ref{Sec7},
we describe an application of our proposed method to semi-stable positive network systems including a Laplacian dynamical system.}

We define
$\mathcal{G}^{(0)}_r=(\mathcal{V}^{(0)}_r, \mathcal{E}^{(0)}_r)$
as the reduced network graph associated to system \eqref{reduced_initial},
where $\mathcal{V}^{(0)}_r:=\{1,\ldots, r\}$ is the node set and $\mathcal{E}^{(0)}_r\subset \mathcal{V}^{(0)}_r\times \mathcal{V}^{(0)}_r$ is the edge set defined by
the nonzero entries of the matrix $A^{(0)}_r$.
The reduced graph $\mathcal{G}^{(0)}_r$ has the same interconnection structure with the original graph $\mathcal{G}$ {except for self-loops}.
That is,
if there is a directed path from $i \in \mathcal{V}^{(0)}_r$ to $j \in \mathcal{V}^{(0)}_r$ {$(i\neq j)$}, there is a directed path from a node of $\pi^{-1}(i)$ to a node of $\pi^{-1}(j)$ in $\mathcal{G}$.
Here, $\pi:\mathcal{V}\rightarrow \mathcal{V}^{(0)}_r$ is the associated map to $\Pi$.

The matrices $A_r^{(0)}$, $B_r^{(0)}$, and $C_r^{(0)}$ in \eqref{shokiten} define
\begin{align*}
 {\rm z} (A_r^{(0)}) &:= \left\{(i,j) \left| (A_r^{(0)})_{ij}=0\quad (i\neq j)  \right.\right\}, \\
    {\rm st}(A_r^{(0)}) &:= \left\{ A_r \in {\bb R}^{r\times r} \left|
        (A_r)_{ij}=0\,\,{\rm if}\,\, (i,j)\in {\rm z}(A_r^{(0)})
	\right. \right\}, \\
    {\rm z} (B_r^{(0)}) &:= \left\{(i,j) \left| (B_r^{(0)})_{ij}=0 \right. \right\}, \\
  {\rm st}(B_r^{(0)}) &:= \left\{ B_r \in {\bb R}^{r\times m}_{\geq 0} \left|
        (B_r)_{ij}=0\,\,{\rm if}\,\, (i,j)\in {\rm z}(B_r^{(0)})
    \right. \right\}, \\
     {\rm z} (C_r^{(0)}) &:= \left\{(i,j) \left| (C_r^{(0)})_{ij}=0 \right. \right\}, \\
{\rm st}(C_r^{(0)}) &:= \left\{ C_r \in {\bb R}^{p\times r}_{\geq 0} \left|
        (C_r)_{ij}=0\,\,{\rm if}\,\,(i,j)\in {\rm z}(C_r^{(0)})
    \right. \right\}.
\end{align*}
These sets are used to formulate our problem.

\subsection{Reconstruction Problem in \cite{misawa2022h}} \label{Sec2-C}

To reconstruct a novel ASPN
\begin{align}
\begin{cases}
\dot{x}_r(t) = A_rx_r(t) + B_ru(t), \\
y_r(t) = C_rx_r(t),
\end{cases} \label{reduced}
\end{align}
 of preserving the interconnection structure of the original graph $\mathcal{G}$ better than initial reduced model \eqref{reduced_initial} in the sense of the $H^2$ norm, we introduce an $H^2$ optimal model reduction problem using
the transfer functions of original system \eqref{original} and reduced system \eqref{reduced} defined as
\begin{align*}
G(s) :=C(sI_n-A)^{-1}B,\,\, 
G_r(s) :=C_r(sI_r-A_r)^{-1}B_r 
\end{align*}
for $s\in {\bb C}$, respectively.
This is because
\begin{align}
\sup_{t\geq 0}\|y(t)-y_r(t)\|_2 \leq \|G-G_r\|_{H^2} \label{relation_daiji}
\end{align}
holds under $\|u\|_{L^2}\leq 1$,
as explained in \cite{gugercin2008h_2, sato2019riemannian}.
Inequality \eqref{relation_daiji} indicates that
the maximum output error norm can be expected to become almost zero when
 $\|G-G_r\|_{H^2}$ is sufficiently small.

The reconstruction problem can be formulated as
\begin{framed}
\noindent
{\bf Problem 0}: Given $r<n$ and $(A_r^{(0)}, B_r^{(0)}, C_r^{(0)})$ in \eqref{shokiten},
\begin{align*}
&\min_{(A_r, B_r,C_r)}\quad \|G-G_r\|_{H^2}^2 \\
&{\rm subject\,\,to}\quad A_r\in {\rm st}(A_r^{(0)})\,\,{\rm is\,\, stable\,\, and\,\, Metzler},\\
&\quad\quad\quad\quad\quad B_r\in {\rm st}(B_r^{(0)}),\,\,C_r\in {\rm st}(C_r^{(0)}).
\end{align*}
\end{framed}

The objective function $\|G-G_r\|_{H^2}^2$ is a non-convex function of $(A_r,B_r,C_r)$,
because
 \begin{align}
 \|G-G_r\|^2_{H^2} = 2f(A_r, B_r, C_r) + \|G\|_{H^2}^2, \label{relation_H2_F}
 \end{align}
and $f(A_r,B_r, C_r)$ is a non-convex function of $(A_r,B_r,C_r)$.
Here,
 \begin{align}
f(A_r,B_r,C_r) &:=\frac{1}{2}{\rm tr}(C_r PC_r^{\top}-2C_rX^{\top}C^{\top}) \label{h}\\
&=\frac{1}{2}{\rm tr}(B_r^{\top} Q B_r+2B^{\top} Y B_r), \nonumber
\end{align}
where $X$, $Y$, $P$, and $Q$ are the solutions to the Sylvester equations
\begin{align}
AX +X A_r^{\top} +BB^{\top}_r  = 0, \label{X}\\
A^{\top}Y +YA_r -C^{\top}C_r  = 0, \label{Y}\\
A_rP +P A_r^{\top} +B_rB^{\top}_r  = 0, \label{P}\\
A_r^{\top}Q +Q A_r +C_r^{\top}C_r  = 0, \label{Q}
\end{align}
respectively.
Because \eqref{relation_H2_F} holds and
$\|G\|_{H^2}$ is independent of $(A_r, B_r, C_r)$ of reduced system \eqref{reduced},
the minimization of $\|G-G_r\|_{H^2}^2$ is equivalent to that of $f(A_r,B_r,C_r)$.
Thus, $f(A_r,B_r,C_r)$
 has been frequently used as the objective function in $H^2$ optimal model reduction problems \cite{misawa2022h, sato2018structure, sato2019riemannian}.

\begin{remark}
We defined initial model \eqref{reduced_initial} using the clustering method in Section \ref{Sec2-B}.
To define initial model \eqref{reduced_initial}, we can use other methods such as Kron reduction methods \cite{dorfler2013kron, sugiyama2022kron}.
\end{remark}

\begin{remark} \label{remark_motivation}
Although the most difficult point of Problem 0 is to ensure the stability of $A_r$,
this can be resolved using a Riemannian manifold constraint proposed in \cite{sato2019riemannian}.
In fact, using the Riemannian manifold formulation with equality and inequality constraints, \cite{misawa2022h} proposed a Riemannian augmented Lagrangian method \cite{liu2019simple} for solving Problem 0.
However, we need to carefully choose hyper parameters in the method
 to preserve the positivity and original interconnection structure.
 That is, for some applications, it may be difficult to obtain feasible solutions to Problem 0 using the method in \cite{misawa2022h}.
\end{remark}

\section{Problem Setting} \label{Sec3}

\subsection{Compact subset of stable and Metzler matrices}
As mentioned in Remark \ref{remark_motivation}, the most difficult point to develop an algorithm for solving Problem 0 is to guarantee that $A_r\in {\bb R}^{r\times r}$ in \eqref{reduced} is stable and Metzler.
To easily guarantee this, we construct a compact subset of stable and Metzler matrices of ${\bb R}^{r\times r}$ using the information of the initial matrix $A_r^{(0)}$ in \eqref{shokiten}
under the assumption that $A_r^{(0)}$ is irreducible in addition to the stable and Metzler properties.

The following lemma is based on a famous result of the Perron--Frobenius theory of nonnegative matrices, as shown in Section 8.3 in \cite{meyer2000matrix}.

\begin{lemma} \label{Lem_Perron_Frobenius}
There exists an eigenvalue $\mu_1$ of $A_r^{(0)}$ such that $\mu_1$ is a real number and 
\begin{align}
{\rm Re}(\mu_r) \leq \cdots \leq {\rm Re}(\mu_2)< \mu_1 <0   \label{eigen_ineq}
\end{align}
where $\mu_2,\ldots, \mu_r\in {\bb C}$ are also the eigenvalues of $A_r^{(0)}$. 
Moreover, the right eigenvector $v_1$ and the left eigenvector $w_1$ corresponding to $\mu_1$ can be chosen to be positive vectors satisfying 
\begin{align}
w_1^{\top}v_1 =1. \label{w_normalization}
\end{align}
\end{lemma}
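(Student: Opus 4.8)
The plan is to reduce everything to the classical Perron--Frobenius theorem for irreducible \emph{nonnegative} matrices by means of a spectral shift. Since $A_r^{(0)}$ is Metzler, its off-diagonal entries are already nonnegative, so choosing any scalar $c \ge \max_{1\le i\le r}\bigl(-(A_r^{(0)})_{ii}\bigr)$ makes
\[
M := A_r^{(0)} + c I_r
\]
a nonnegative matrix. Moreover, $M$ and $A_r^{(0)}$ differ only on the diagonal, hence share the same off-diagonal zero/nonzero pattern and therefore the same associated directed graph; since $A_r^{(0)}$ is assumed irreducible, $M$ is an irreducible nonnegative matrix. The shift also sets up an exact dictionary between the two spectra: $A_r^{(0)} v = \mu v$ if and only if $M v = (\mu + c) v$, so the eigenvalues of $M$ are $\mu_j + c$ and, crucially, the eigenvectors are left unchanged.

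Next I would invoke the Perron--Frobenius theorem (Section 8.3 of \cite{meyer2000matrix}) for the irreducible nonnegative matrix $M$. It guarantees that the spectral radius $\rho(M)$ is itself an eigenvalue, that it is \emph{simple} (algebraic multiplicity one), and that it admits a strictly positive right eigenvector; applying the same theorem to $M^\top$ (again irreducible and nonnegative with $\rho(M^\top)=\rho(M)$) yields a strictly positive left eigenvector of $M$ for $\rho(M)$. Every other eigenvalue $\lambda$ of $M$ satisfies $|\lambda| \le \rho(M)$. Setting $\mu_1 := \rho(M) - c$ and transporting the eigenvectors back through the dictionary above produces a real eigenvalue $\mu_1$ of $A_r^{(0)}$ together with positive right and left eigenvectors $v_1, w_1$.

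It then remains to establish the three analytic claims. The bound $\mu_1 < 0$ is immediate: $A_r^{(0)}$ is stable, so every eigenvalue, in particular the real one $\mu_1$, has negative real part. For the strict separation ${\rm Re}(\mu_j) < \mu_1$ with $j\ge 2$, write $\mu_j = \lambda_j - c$ for a non-Perron eigenvalue $\lambda_j$ of $M$; then ${\rm Re}(\mu_j) < \mu_1$ is equivalent to ${\rm Re}(\lambda_j) < \rho(M)$. This is the one point that requires care and is the crux of the argument, because in the imprimitive case $M$ may carry several eigenvalues on the circle $|\lambda| = \rho(M)$. However, ${\rm Re}(\lambda_j) \le |\lambda_j| \le \rho(M)$, and equality ${\rm Re}(\lambda_j) = \rho(M)$ would force $\lambda_j = \rho(M)$, contradicting the simplicity of the Perron root; hence ${\rm Re}(\lambda_j) < \rho(M)$ strictly, which gives \eqref{eigen_ineq} after ordering the remaining eigenvalues by decreasing real part. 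Finally, since $v_1$ and $w_1$ are strictly positive, $w_1^\top v_1 > 0$, so replacing $w_1$ by $w_1/(w_1^\top v_1)$ preserves positivity and enforces the normalization \eqref{w_normalization}.
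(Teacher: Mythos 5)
Your proof is correct and follows essentially the same route as the paper: shift $A_r^{(0)}$ by a multiple of the identity to obtain an irreducible nonnegative matrix, invoke the Perron--Frobenius theorem, and translate the spectrum and eigenvectors back. The only difference is presentational --- you derive the positive left eigenvector via $M^\top$ and normalize by hand, and you explicitly verify the strict separation ${\rm Re}(\mu_j)<\mu_1$ in the imprimitive case (a detail the paper leaves implicit in its citation of algebraic simplicity), which is a welcome addition rather than a deviation.
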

\begin{proof}
See Appendix \ref{Ape_proof_Lemma}.
\qed
\end{proof}

Using Lemma \ref{Lem_Perron_Frobenius}, we obtain the following key theorem to formulate our problem.

\begin{theorem} \label{Thm_key_set}
Let $\mu_1$, $\mu_r$, $v_1$, and $w_1$ be the same symbols with the statements in Lemma \ref{Lem_Perron_Frobenius}.
That is, $\mu_1$ and $\mu_r$ are the eigenvalues of $A_r^{(0)}$ satisfying \eqref{eigen_ineq},
$v_1$ and $w_1$ are the right and left positive eigenvectors, that satisfies \eqref{w_normalization},  corresponding to $\mu_1$, respectively.
For any positive real number $\epsilon$ satisfying $\mu_1+\epsilon \leq 0$, define
\begin{align}
\bar{A}_r:= A_r^{(0)}-(\mu_1+\epsilon) v_1 w_1^{\top}. \label{bar_Ar}
\end{align}
Then, for any positive real number $\gamma$ satisfying $-\gamma I_r \leq A_r^{(0)}$,
each matrix in
\begin{align}
S_{A_r}(A_r^{(0)},\epsilon,\gamma) 
:= \{ A_r\in {\bb R}^{r\times r}\,|\, -\gamma I_r \leq A_r \leq \bar{A}_r \}
\end{align}
 is stable and Metzler, and $A_r^{(0)}\in S_{A_r}(A_r^{(0)},\epsilon,\gamma)$.
Moreover, the real parts of all the eigenvalues in any matrices in $S_{A_r}$ are less than or equal to $-\epsilon$.
\end{theorem}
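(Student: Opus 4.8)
The plan is to analyze the ``largest'' matrix $\bar{A}_r$ first and then propagate its properties to every matrix in the order interval $S_{A_r}(A_r^{(0)},\epsilon,\gamma)$ by a monotonicity argument for Metzler matrices. First I would check that $\bar{A}_r$ is itself Metzler and locate its spectrum. Since $\mu_1+\epsilon\leq 0$ and $v_1,w_1$ are positive vectors, the rank-one matrix $-(\mu_1+\epsilon)v_1w_1^{\top}$ is entrywise nonnegative, so adding it to the Metzler matrix $A_r^{(0)}$ keeps all off-diagonal entries nonnegative and $\bar{A}_r$ is Metzler. For the spectrum I would exploit that $\mu_1$ is a \emph{simple} eigenvalue by Lemma \ref{Lem_Perron_Frobenius}, so that $P_1:=v_1w_1^{\top}$ is the associated spectral projection with $P_1^2=P_1$ and $A_r^{(0)}P_1=P_1A_r^{(0)}=\mu_1P_1$. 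A direct computation then yields $\bar{A}_rP_1=-\epsilon P_1$ and $\bar{A}_r(I_r-P_1)=A_r^{(0)}(I_r-P_1)$, showing that the rank-one update moves only $\mu_1$ to $-\epsilon$ while fixing $\mu_2,\ldots,\mu_r$. Because \eqref{eigen_ineq} together with $\mu_1+\epsilon\leq 0$ gives ${\rm Re}(\mu_j)<\mu_1\leq-\epsilon$ for $j\geq 2$, the largest real part among the eigenvalues of $\bar{A}_r$ is exactly $-\epsilon$.

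Next I would transfer these properties to an arbitrary $A_r\in S_{A_r}(A_r^{(0)},\epsilon,\gamma)$. The lower bound $-\gamma I_r\leq A_r$ forces each off-diagonal entry of $A_r$ to be at least the corresponding (zero) off-diagonal entry of $-\gamma I_r$, so $A_r$ is Metzler. The key step is the monotonicity of the spectral abscissa on Metzler matrices: if $M_1\leq M_2$ are both Metzler, choosing $c>0$ large enough that $M_1+cI_r$ and $M_2+cI_r$ are nonnegative gives $0\leq M_1+cI_r\leq M_2+cI_r$, so the Perron--Frobenius monotonicity of the spectral radius of nonnegative matrices yields $\rho(M_1+cI_r)\leq\rho(M_2+cI_r)$; since $M+cI_r\geq 0$ implies $\rho(M+cI_r)=\alpha(M)+c$ where $\alpha(\cdot)$ denotes the largest real part of the eigenvalues, this gives $\alpha(M_1)\leq\alpha(M_2)$. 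Applying it with $M_1=A_r$ and $M_2=\bar{A}_r$ gives $\alpha(A_r)\leq\alpha(\bar{A}_r)=-\epsilon<0$, so every element of $S_{A_r}$ is stable with all eigenvalue real parts at most $-\epsilon$.

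Finally, the membership $A_r^{(0)}\in S_{A_r}(A_r^{(0)},\epsilon,\gamma)$ follows from the two bounds: the lower bound $-\gamma I_r\leq A_r^{(0)}$ is precisely the hypothesis on $\gamma$, while the upper bound holds because $\bar{A}_r-A_r^{(0)}=-(\mu_1+\epsilon)v_1w_1^{\top}\geq 0$. I expect the main obstacle to be the spectral computation for $\bar{A}_r$, since it relies on $\mu_1$ being simple so that the rank-one update relocates a single eigenvalue cleanly; this is exactly where the irreducibility of $A_r^{(0)}$ and the Perron--Frobenius structure supplied by Lemma \ref{Lem_Perron_Frobenius} are indispensable. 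The monotonicity of the spectral abscissa is standard but must be invoked with care, because it is the Perron--Frobenius eigenvalue (the spectral abscissa), rather than the modulus-based spectral radius, that is monotone under the entrywise order on Metzler matrices.
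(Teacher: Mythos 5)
Your proof is correct, and it settles the crucial point---that the spectral abscissa of $\bar{A}_r$ equals $-\epsilon$---by a genuinely different argument than the paper. The paper never computes the spectrum of $\bar{A}_r$: it shifts to $A_2 := \bar{A}_r + \gamma I_r \geq 0$, observes $A_2 v_1 = (-\epsilon+\gamma)v_1$ with $v_1>0$, and invokes parts 2) and 3) of Proposition \ref{Pro_Perron_Frobenius} (nonnegative eigenvectors of an irreducible nonnegative matrix are only the Perron vectors) to conclude $\rho(A_2) = -\epsilon+\gamma$; this step tacitly uses irreducibility of $A_2$ itself, which is immediate when $\mu_1+\epsilon<0$ (then $-(\mu_1+\epsilon)v_1w_1^{\top}$ is entrywise positive) and inherited from $A_r^{(0)}$ when $\mu_1+\epsilon=0$. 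You instead exploit that $P_1=v_1w_1^{\top}$ is an idempotent commuting with $A_r^{(0)}$ (thanks to \eqref{w_normalization}) to deflate: $\bar{A}_r$ acts as $-\epsilon$ on ${\rm ran}\,P_1$ and agrees with $A_r^{(0)}$ on $\ker P_1=\{x\,|\,w_1^{\top}x=0\}$, so by simplicity of $\mu_1$ the spectrum of $\bar{A}_r$ is exactly $\{-\epsilon,\mu_2,\ldots,\mu_r\}$. This yields strictly more information than the paper extracts (the full spectrum, not just the abscissa) and sidesteps the irreducibility check on the shifted upper bound, with the simplicity of $\mu_1$ supplied by Lemma \ref{Lem_Perron_Frobenius} doing the work instead. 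The rest of your argument coincides with the paper's up to notation: the Metzler property from $-\gamma I_r\leq A_r$, membership of $A_r^{(0)}$ from $-(\mu_1+\epsilon)v_1w_1^{\top}\geq 0$, and the transfer of the eigenvalue bound by shifting Metzler matrices into the nonnegative cone and applying Propositions \ref{Pro_nonnegative_spectral} and \ref{Pro_general_nonnegative} (the paper fixes the shift $c=\gamma$, the natural choice since every element of $S_{A_r}$ has diagonal entries at least $-\gamma$). Your closing caveat---that it is the spectral abscissa, via the identity $\rho(M+cI_r)=\alpha(M)+c$ for $M+cI_r\geq 0$, and not the modulus-based radius that is monotone here---is exactly the right point of care and is correctly justified by Proposition \ref{Pro_general_nonnegative}.
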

\begin{proof}
See Appendix \ref{Ape_proof_Theorem}.\qed
\end{proof}

\subsection{Novel reconstruction problem} \label{Sec2-D}

From the above discussions,
we consider the following novel ASPN reconstruction problem,
which is a non-convex optimization problem with convex constraints.

\begin{framed}
\noindent
{\bf Problem 1}: Given $r<n$, $\epsilon>0$, $\gamma>0$, and $(A_r^{(0)}, B_r^{(0)}, C_r^{(0)})$ in \eqref{shokiten},
\begin{align*}
&\min_{(A_r, B_r,C_r)}\quad f(A_r,B_r,C_r) \\
&{\rm subject\,\,to}\quad A_r\in {\rm st}(A_r^{(0)})\cap S_{A_r}(A_r^{(0)},\epsilon,\gamma),\\
&\quad\quad\quad\quad\quad B_r\in {\rm st}(B_r^{(0)}),\,\,C_r\in {\rm st}(C_r^{(0)}).
\end{align*}
\end{framed}

\noindent
Problem 1 is a more tractable problem than Problem 0,
because the set ${\rm st}(A_r^{(0)})\cap S_{A_r}(A_r^{(0)},\epsilon,\gamma)$ is a simple convex set unlike the manifold constraint with inequality constraints{, which was formulated in \cite{misawa2022h},} on $A_r$ in Problem 0.

Problem 1 is equivalent to the following unconstrained non-convex optimization problem.

\begin{framed}
\noindent
{\bf Problem 1'}: Given $r<n$, $\epsilon>0$, $\gamma>0$, and $(A_r^{(0)}, B_r^{(0)}, C_r^{(0)})$ in \eqref{shokiten},
\begin{align*}
&\min_{(A_r, B_r,C_r)}\quad h(A_r,B_r, {C_r}).
\end{align*}
\end{framed}

\noindent
Here,
\begin{align*}
h(A_r,B_r, {C_r}) := & f(A_r,B_r,C_r)+g(A_r,B_r,C_r),\\
    g(A_r,B_r,C_r):=&\mathcal{I}_{{\rm st}(A_r^{(0)})\cap S_{A_r}(A_r^{(0)},\epsilon,\gamma)}(A_r)\\
    &
+\mathcal{I}_{{\rm st}(B_r^{(0)})}(B_r)+ \mathcal{I}_{{\rm st}(C_r^{(0)})}(C_r),
\end{align*}
where
 $\mathcal{I}_{S}$ denotes the indicator function of a set $S$.
That is, for an arbitrary set $S$,
\begin{align}
\mathcal{I}_{S}(X) := \begin{cases}
0\quad &{\rm if}\quad X\in S, \\
+\infty &{\rm if}\quad X\not\in S.
\end{cases} \label{indicator_func}
\end{align}
Note that
$g(A_r,B_r, C_r)$ is convex in terms of $(A_r,B_r, C_r)$
unlike $f(A_r,B_r, C_r)$.

Because Problem 1' is a non-convex optimization problem,
we develop an algorithm for finding a stationary point to Problem 1' instead of a global minimizer.
Here, a stationary point to Problem 1' is $(A^*_r, B^*_r, C^*_r) \in {\bb R}^{r\times r}\times {\bb R}^{r\times m}\times {\bb R}^{p\times r}$ satisfying
\begin{align}
0 \in \partial h(A^*_r,B^*_r, C^*_r), \label{stationary_Pro1}
\end{align}
where $\partial h(A_r,B_r,C_r)$ denotes the limiting subdifferential of $h$ at $(A_r, B_r, C_r)$.
For the {detailed} explanation of the limiting subdifferential, see \cite{li2020understanding}.

\begin{remark}
The feasible set of Problem 1 is included in that of Problem 0.
However, a stationary point to Problem 1 may be better than Problem 0 in the sense of the $H^2$ norm.
This is because the objective function is non-convex.
That is, stationary points that we can obtain for Problems 0 and 1 are highly dependent on an initial point
$(A_r^{(0)}, B_r^{(0)}, C_r^{(0)})$.
In Section \ref{Sec6}, we demonstrate this fact.
\end{remark}


\section{Theoretical analysis}  \label{Sec4}

To develop an efficient algorithm for Problem 1',
we show that there exist positive $L_{A_r}(B_r,C_r)$, $L_{B_r}(A_r, C_r)$, and $L_{C_r}(A_r, B_r)$, which {are} called the block Lipschitz constants \cite{beck2017first}, such that
\begin{align}
&\|\nabla_{A_r} f((A_r)_1,B_r,C_r) - \nabla_{A_r} f((A_r)_2,B_r,C_r)\|_{\rm F} \nonumber\\
\leq& L_{A_r}(B_r,C_r) \|(A_r)_1-(A_r)_2\|_{\rm F}, \label{Lip_alpha}\\
&\|\nabla_{B_r} f(A_r,(B_r)_1,C_r) - \nabla_{B_r} f(A_r,(B_r)_2,C_r)\|_{\rm F} \nonumber\\ 
\leq& L_{B_r}(A_r,C_r) \|(B_r)_1-(B_r)_2\|_{\rm F}, \label{Lip_B}\\
&\|\nabla_{C_r} f(A_r,B_r,(C_r)_1) - \nabla_{C_r} f(A_r,B_r,(C_r)_2)\|_{\rm F} \nonumber \\
\leq& L_{C_r}(A_r,B_r) \|(C_r)_1-(C_r)_2\|_{\rm F}, \label{Lip_C}
\end{align}
where for $i=1, 2$, $((A_r)_i, B_r, C_r)$, $(A_r,(B_r)_i,C_r)$, and $(A_r, B_r, (C_r)_i)$
are contained in $S_{A_r}(A_r^{(0)},\epsilon,\gamma)\times {\bb R}^{r\times m}\times {\bb R}^{p\times r}$.
We use the block Lipschitz constants to define step sizes in our proposed algorithms in Section \ref{Sec5} without performing a line-search, as explained in \cite{nocedal2006numerical}.
Moreover, the block Lipschitz constants are used to prove the global convergence of a sequence generated by our proposed algorithm described in Section \ref{Sec4} to a stationary point of Problem 1.

To this end, we note that
the gradients of $f$ defined by \eqref{h} in terms of $A_r$, $B_r$, and $C_r$ are given by
\begin{align}
\nabla_{A_r} f(A_r,B_r,C_r) &= QP + Y^{\top} X, \label{grad_h_A}\\
\nabla_{B_r} f(A_r,B_r,C_r) &= QB_r+Y^{\top} B, \label{grad_h_B}\\
\nabla_{C_r} f(A_r,B_r,C_r) &= C_rP-CX, \label{grad_h_C}
\end{align}
respectively, as shown in Theorem 3.3 in \cite{van2008h2} and Section 3.2 in \cite{wilson1970optimum},
where $X$, $Y$, $P$, and $Q$ are the solutions to \eqref{X}, \eqref{Y}, \eqref{P}, and \eqref{Q}, respectively.

\subsection{Proof of \eqref{Lip_B} and \eqref{Lip_C}}

Using \eqref{grad_h_B} and \eqref{grad_h_C}, the expressions of $L_{B_r}(A_r,C_r)$ and $L_{C_r}(A_r,B_r)$ can be easily derived as follows.

\begin{theorem} \label{Thm1}
If
\begin{align}
L_{B_r}(A_r, C_r) := \|Q\|_{\rm F}, \label{Lip_B2}
\end{align}
\eqref{Lip_B} holds for any $(A_r,(B_r)_1,C_r),\, (A_r,(B_r)_2,C_r)\in S_{A_r}(A_r^{(0)},\epsilon,\gamma)\times {\bb R}^{r\times m}\times {\bb R}^{p\times r}$.
Moreover,
if 
\begin{align}
L_{C_r}(A_r, B_r) := \|P\|_{\rm F}, \label{Lip_C2}
\end{align}
\eqref{Lip_C} holds for any $(A_r,B_r,(C_r)_1),\, (A_r,B_r,(C_r)_2)\in S_{A_r}(A_r^{(0)},\epsilon,\gamma)\times {\bb R}^{r\times m}\times {\bb R}^{p\times r}$.
\end{theorem}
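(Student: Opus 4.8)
The plan is to prove the two Lipschitz bounds directly from the closed-form gradient expressions \eqref{grad_h_B} and \eqref{grad_h_C}, exploiting the fact that each of these gradients is \emph{affine} in the variable of interest. First I would treat \eqref{Lip_B}. Fixing $A_r$ and $C_r$ and varying only $B_r$, observe from \eqref{grad_h_B} that $\nabla_{B_r} f(A_r,B_r,C_r) = QB_r + Y^\top B$, where crucially $Q$ solves \eqref{Q} and $Y$ solves \eqref{Y}, neither of which depends on $B_r$ (equation \eqref{Q} involves only $A_r$ and $C_r$, and \eqref{Y} involves only $A$, $A_r$, $C$, $C_r$). Hence the map $B_r \mapsto \nabla_{B_r} f$ is affine with linear part $B_r \mapsto Q B_r$, so the difference telescopes:
\begin{align*}
\nabla_{B_r} f(A_r,(B_r)_1,C_r) - \nabla_{B_r} f(A_r,(B_r)_2,C_r) = Q\bigl((B_r)_1 - (B_r)_2\bigr).
\end{align*}

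Taking Frobenius norms and applying the submultiplicative bound $\|QM\|_{\rm F} \leq \|Q\|_{\rm F}\|M\|_{\rm F}$ yields \eqref{Lip_B} with the constant $L_{B_r}(A_r,C_r) = \|Q\|_{\rm F}$ as claimed in \eqref{Lip_B2}. The argument for \eqref{Lip_C} is entirely parallel: from \eqref{grad_h_C}, $\nabla_{C_r} f(A_r,B_r,C_r) = C_r P - CX$, where $P$ solves \eqref{P} (depending only on $A_r$ and $B_r$) and $X$ solves \eqref{X} (depending only on $A$, $A_r$, $B$, $B_r$), so neither depends on $C_r$. The map $C_r \mapsto \nabla_{C_r} f$ is therefore affine with linear part $C_r \mapsto C_r P$, the difference is $\bigl((C_r)_1 - (C_r)_2\bigr)P$, and the bound $\|MP\|_{\rm F} \leq \|M\|_{\rm F}\|P\|_{\rm F}$ gives \eqref{Lip_C} with $L_{C_r}(A_r,B_r) = \|P\|_{\rm F}$ as in \eqref{Lip_C2}.

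The only genuine subtlety worth flagging is well-definedness of $P$ and $Q$, i.e. unique solvability of the Sylvester (here Lyapunov) equations \eqref{P} and \eqref{Q}; this is guaranteed because $A_r \in S_{A_r}(A_r^{(0)},\epsilon,\gamma)$ forces $A_r$ to be stable by Theorem \ref{Thm_key_set}, so the Lyapunov operators $M \mapsto A_r M + M A_r^\top$ and $M \mapsto A_r^\top M + M A_r$ are invertible and $P, Q$ exist and are finite, making $\|P\|_{\rm F}$ and $\|Q\|_{\rm F}$ legitimate finite constants. Beyond this the derivation is essentially mechanical, so I do not anticipate a real obstacle; the ``hard part,'' such as it is, lies only in making explicit the dependency bookkeeping that shows $Q$ is independent of $B_r$ and $P$ is independent of $C_r$, which is what makes the gradients affine and the Lipschitz constants exact rather than merely existent.
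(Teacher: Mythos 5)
Your proof is correct and follows essentially the same route as the paper's: both compute the gradient difference as $Q\bigl((B_r)_1-(B_r)_2\bigr)$ (respectively $\bigl((C_r)_1-(C_r)_2\bigr)P$), using that $Q$, $Y$ are independent of $B_r$ and $P$, $X$ are independent of $C_r$, and then apply submultiplicativity of the Frobenius norm. Your added remark on unique solvability of \eqref{P} and \eqref{Q} via the stability of $A_r\in S_{A_r}(A_r^{(0)},\epsilon,\gamma)$ is a small rigor bonus the paper leaves implicit, but it changes nothing structurally.
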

\begin{proof}
It follows from \eqref{grad_h_B} that
 for any $(A_r,(B_r)_1,C_r),\, (A_r,(B_r)_2,C_r)\in S_{A_r}(A_r^{(0)},\epsilon,\gamma)\times {\bb R}^{r\times m}\times {\bb R}^{p\times r}$,
\begin{align*}
&\|\nabla_{B_r} f(A_r,(B_r)_1,C_r) - \nabla_{B_r} f(A_r,(B_r)_2,C_r)\|_{\rm F} \\
\leq & \|Q ( (B_r)_1- (B_r)_2)\|_{\rm F} 
\leq  \|Q\|_{\rm F} \|( (B_r)_1- (B_r)_2)\|_{\rm F}.
\end{align*}
Hence, if \eqref{Lip_B2} holds, \eqref{Lip_B} is satisfied for any $(A_r,B_r,(C_r)_1),\, (A_r,B_r,(C_r)_2)\in S_{A_r}(A_r^{(0)},\epsilon,\gamma)\times {\bb R}^{r\times m}\times {\bb R}^{p\times r}$.

Similarly, we can show that if \eqref{Lip_C2} holds,
\eqref{Lip_C} is satisfied for any $(A_r,B_r,(C_r)_1),\, (A_r,B_r,(C_r)_2)\in S_{A_r}(A_r^{(0)},\epsilon,\gamma)\times {\bb R}^{r\times m}\times {\bb R}^{p\times r}$. \qed
\end{proof}

The matrices $P$ and $Q$ in \eqref{Lip_C2} and \eqref{Lip_B2} are the solutions to
Lyapunov equations \eqref{P} and \eqref{Q}, respectively.
That is, $P$ is the controllability Gramian, which is a function of $A_r$ and $B_r$, of reduced system \eqref{reduced} and $Q$ is the observability Gramian, which is a function of $A_r$ and $C_r$, of \eqref{reduced}.

\subsection{Proof of \eqref{Lip_alpha}} \label{Sec3-B}

The function $f(A_r, B_r, C_r)$ with respect to $A_r$ is not convex even if $B_r$ and $C_r$ are fixed.
Due to this fact, it is difficult to derive the concrete expression of $L_{A_r}(B_r,C_r)$ unlike $L_{B_r}(A_r,C_r)$ and $L_{C_r}(A_r,B_r)$.

However, we can obtain the following theorem using the compactness of the set $S_{A_r}(A_r^{(0)},\epsilon,\gamma)$ defined in Theorem
\ref{Thm_key_set}.

\begin{theorem} \label{main_Thm}
For any $((A_r)_1,B_r,C_r),\, ((A_r)_2,B_r,C_r)\in S_{A_r}(A_r^{(0)},\epsilon,\gamma)\times {\bb R}^{r\times m}\times {\bb R}^{p\times r}$,
there exist positive constants $c_1$ and $c_2$
such that
\begin{align}
L_{A_r}(B_r,C_r)=(c_1+c_2\|B_r\|_{\rm F}\|C_r\|_{\rm F})\|B_r\|_{\rm F}\|C_r\|_{\rm F} \label{Lip_const_Ar}
\end{align}
satisfying \eqref{Lip_alpha}.
\end{theorem}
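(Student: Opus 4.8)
The plan is to exploit the representation $\nabla_{A_r} f = QP + Y^{\top} X$ in \eqref{grad_h_A} and to control the $A_r$-dependence of each of the four auxiliary matrices $P, Q, X, Y$ separately. Fixing $B_r$ and $C_r$, I regard $P, Q, X, Y$ as matrix-valued maps of $A_r$ defined implicitly by \eqref{P}, \eqref{Q}, \eqref{X}, \eqref{Y}. The strategy has three stages: (a) obtain uniform norm bounds on $P, Q, X, Y$ over the compact set $S_{A_r}(A_r^{(0)}, \epsilon, \gamma)$ that scale correctly in $\|B_r\|_{\rm F}$ and $\|C_r\|_{\rm F}$; (b) show that each of $P, Q, X, Y$ is Lipschitz in $A_r$ on $S_{A_r}$ with a Lipschitz constant of the same scaling; and (c) combine these through the product structure of $QP + Y^{\top} X$ to read off \eqref{Lip_const_Ar}.

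For stage (a), note that by Theorem \ref{Thm_key_set} every $A_r \in S_{A_r}$ is stable with all eigenvalues having real part at most $-\epsilon$, so the Lyapunov operator $\mathcal{L}_{A_r}(Z) := A_r Z + Z A_r^{\top}$ and the Sylvester operators $Z \mapsto AZ + Z A_r^{\top}$ and $Z \mapsto A^{\top} Z + Z A_r$ are all invertible, since the relevant eigenvalue sums all have strictly negative real part and hence the operator spectra cannot contain $0$. Each inverse operator norm is a continuous function of $A_r$, and $S_{A_r}$ is compact, being the closed bounded box $-\gamma I_r \leq A_r \leq \bar{A}_r$; hence these inverse norms are bounded by uniform constants over $S_{A_r}$. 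Solving \eqref{P}, \eqref{Q}, \eqref{X}, \eqref{Y} then yields $\|P\|_{\rm F} \leq \kappa_1 \|B_r\|_{\rm F}^2$, $\|Q\|_{\rm F} \leq \kappa_2 \|C_r\|_{\rm F}^2$, $\|X\|_{\rm F} \leq \kappa_3 \|B_r\|_{\rm F}$, and $\|Y\|_{\rm F} \leq \kappa_4 \|C_r\|_{\rm F}$, where $\kappa_3, \kappa_4$ absorb the fixed factors $\|B\|_{\rm F}, \|C\|_{\rm F}$, and all four constants depend only on $S_{A_r}$ and on $A, B, C$.

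For stage (b), I subtract the defining equation at $(A_r)_2$ from that at $(A_r)_1$, write $\Delta A_r := (A_r)_1 - (A_r)_2$, and regroup. For instance $\Delta P := P_1 - P_2$ satisfies the Lyapunov equation $(A_r)_1 \Delta P + \Delta P (A_r)_1^{\top} = -(\Delta A_r\, P_2 + P_2\, \Delta A_r^{\top})$, whose right-hand side has Frobenius norm at most $2\|P_2\|_{\rm F}\|\Delta A_r\|_{\rm F}$; applying the uniform inverse bound and the stage-(a) estimate gives $\|\Delta P\|_{\rm F} \leq c_P \|B_r\|_{\rm F}^2 \|\Delta A_r\|_{\rm F}$. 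The same manipulation turns $\Delta Q, \Delta X, \Delta Y$ into solutions of analogous Lyapunov and Sylvester equations forced by $\Delta A_r$ times an already-bounded solution, giving $\|\Delta Q\|_{\rm F} \leq c_Q \|C_r\|_{\rm F}^2 \|\Delta A_r\|_{\rm F}$, $\|\Delta X\|_{\rm F} \leq c_X \|B_r\|_{\rm F}\|\Delta A_r\|_{\rm F}$, and $\|\Delta Y\|_{\rm F} \leq c_Y \|C_r\|_{\rm F}\|\Delta A_r\|_{\rm F}$.

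Finally, for stage (c) I telescope $Q_1 P_1 - Q_2 P_2 = Q_1 \Delta P + \Delta Q\, P_2$ and $Y_1^{\top} X_1 - Y_2^{\top} X_2 = Y_1^{\top} \Delta X + (\Delta Y)^{\top} X_2$. The triangle inequality together with stages (a) and (b) shows the $QP$ contribution is bounded by a constant multiple of $\|B_r\|_{\rm F}^2 \|C_r\|_{\rm F}^2 \|\Delta A_r\|_{\rm F}$ and the $Y^{\top} X$ contribution by a constant multiple of $\|B_r\|_{\rm F}\|C_r\|_{\rm F}\|\Delta A_r\|_{\rm F}$; collecting the two produces exactly $(c_1 + c_2 \|B_r\|_{\rm F}\|C_r\|_{\rm F})\|B_r\|_{\rm F}\|C_r\|_{\rm F}$, with $c_1$ arising from the $Y^{\top}X$ term and $c_2$ from the $QP$ term. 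I expect the main obstacle to be stage (a): all constants must be independent of the particular $(A_r)_1, (A_r)_2$, and this uniformity is precisely what the compactness of $S_{A_r}$ established in Theorem \ref{Thm_key_set} provides, through the uniform spectral gap $-\epsilon$ that keeps the Lyapunov and Sylvester operators uniformly away from singularity.
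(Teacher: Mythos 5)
Your proposal is correct, and it reaches the paper's constants by a genuinely different technical execution of the same structural decomposition. The paper works infinitesimally: it computes ${\rm Hess}_{A_r} f(A_r,B_r,C_r)[\xi]=Q'P+QP'+Y'^{\top}X+Y^{\top}X'$, where $X',Y',P',Q'$ solve Sylvester equations forced by $\xi$ times the base solutions, bounds every factor through the integral representations of the Sylvester solutions (uniform exponential decay of $e^{A_r t}$ over the compact set $S_{A_r}(A_r^{(0)},\epsilon,\gamma)$), and reads off $c_1=c_Xc_Y(c_{X'}+c_{Y'})$ and $c_2=c_Pc_Q(c_{P'}+c_{Q'})$. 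You instead work with finite differences: your perturbation identity $(A_r)_1\Delta P+\Delta P(A_r)_1^{\top}=-(\Delta A_r\,P_2+P_2\,\Delta A_r^{\top})$ and its companions are the exact, non-infinitesimal analogues of the paper's primed equations, and your telescoping $Q_1\Delta P+\Delta Q\,P_2+Y_1^{\top}\Delta X+(\Delta Y)^{\top}X_2$ mirrors the Hessian expansion term by term, with the required uniformity supplied by compactness of $S_{A_r}$ together with continuity of the inverse Sylvester and Lyapunov operators rather than by integral formulas. Your route buys a small but real gain in rigor: the paper's argument needs the mean-value step from a uniform Hessian bound to the Lipschitz inequality \eqref{Lip_alpha}, which tacitly uses convexity of $S_{A_r}$ (true, since it is the box $-\gamma I_r\leq A_r\leq\bar{A}_r$, but left unstated) and leaves the $o(\|\xi\|_{\rm F})$ bookkeeping implicit, whereas your difference identities yield \eqref{Lip_alpha} directly for any pair $(A_r)_1,(A_r)_2\in S_{A_r}$. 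What the paper's route buys in exchange is the explicit Hessian formula, reusable for second-order analysis, and constants whose provenance is tied transparently to the uniform spectral margin $-\epsilon$ of Theorem \ref{Thm_key_set} through bounds of the form $\|e^{A_r t}\|\leq Me^{-\epsilon t}$.
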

\begin{proof}
It suffices to show that there exist positive constants $c_1$ and $c_2$ such that \eqref{Lip_const_Ar} satisfies
\begin{align}
\|{\rm Hess}_{A_r} f(A_r,B_r, C_r)[\xi]\|_{\rm F} \leq L_{A_r}(B_r,C_r)\|\xi\|_{\rm F}, \label{Hessian_goal}
\end{align}
where ${\rm Hess}_{A_r} f(A_r,B_r, C_r)[\xi]$ is defined as
\begin{align*}
&\nabla_{A_r} f(A_r+\xi,B_r, C_r)-\nabla_{A_r} f(A_r,B_r, C_r) \\
=&{\rm Hess}_{A_r} f(A_r,B_r, C_r)[\xi]+o(\|\xi\|_{\rm F}).
\end{align*}
From \eqref{grad_h_A}, we obtain
\begin{align}
    &{\rm Hess}_{A_r} f(A_r,B_r, C_r)[\xi] \nonumber\\
    =& Q'P+QP'+Y'^\top X+Y^\top X', \label{Hess_f}
\end{align}
where $X'$, $Y'$, $P'$, and $Q'$ are the derivatives of $X$, $Y$, $P$, $Q$ in the direction of $\xi$, respectively, in terms of $A_r$.
That is, it follows from \eqref{X}, \eqref{Y}, \eqref{P}, and \eqref{Q}
that $X'$, $Y'$, $P'$, and $Q'$ are the solutions to the Sylvester equations
\begin{align*}
    AX'+X'A_r^{\top}+X\xi^\top &= 0, \\
    A^{\top}Y'+Y'A_r+Y\xi &=0, \\
    A_rP'+P'A_r^{\top}+\xi P +P\xi^{\top} &= 0, \\
    A_r^{\top}Q'+Q'A_r + \xi^{\top}Q+Q\xi &=0,
\end{align*}
respectively.
Because $A$ is stable and $A_r$ is also stable on the compact set $S_{A_r}(A_r^{(0)},\epsilon,\gamma)$,
the integral formulas to the Sylvester equations guarantee that there exist positive constants $c_X$, $c_Y$, $c_P$, $c_Q$, $c_{X'}$, $c_{Y'}$, $c_{P'}$, and $c_{Q'}$ such that
\begin{align*}
    &\|X\|_{\rm F}\leq c_X \|B_r\|_{\rm F},\,\,
    \|Y\|_{\rm F}\leq c_Y \|C_r\|_{\rm F},\\
    &\|P\|_{\rm F}\leq c_P \|B_r\|^2_{\rm F}, \,\,
    \|Q\|_{\rm F}\leq c_Q \|C_r\|^2_{\rm F},\\
    &\|X'\|_{\rm F}\leq c_{X'}\|X\|_{\rm F}\|\xi\|_{\rm F},\,\,
    \|Y'\|_{\rm F}\leq c_{Y'}\|Y\|_{\rm F}\|\xi\|_{\rm F},\\
    &\|P'\|_{\rm F}\leq c_{P'}\|P\|_{\rm F}\|\xi\|_{\rm F},\,\,
    \|Q'\|_{\rm F}\leq c_{Q'}\|Q\|_{\rm F}\|\xi\|_{\rm F}.
\end{align*}
Because \eqref{Hess_f} implies
\begin{align*}
    &\|{\rm Hess}_{A_r} f(A_r,B_r,C_r)[\xi]\|_{\rm F}\\
    \leq & \|P\|_{\rm F} \|Q'\|_{\rm F} + \|Q\|_{\rm F} \|P'\|_{\rm F} + \|X\|_{\rm F} \|Y'\|_{\rm F} + \|Y\|_{\rm F} \|X'\|_{\rm F},
\end{align*}
we obtain \eqref{Hessian_goal},
where
$c_1:=c_Xc_Y(c_{X'}+c_{Y'})$ and $c_2:=c_Pc_Q(c_{P'}+c_{Q'})$.
This completes the proof.\qed
\end{proof}




\section{Algorithm for solving Problem 1} \label{Sec5}

Based on Theorems \ref{Thm1} and \ref{main_Thm}, we propose Algorithm 1,
which iteratively updates $A_r$, $B_r$, and $C_r$ using projected gradient methods.
Algorithm 1 always generates a stable positive reduced network system \eqref{reduced},
which is better than the initial model \eqref{reduced_initial} in the sense of the $H^2$ norm.
This is because 
the map ${\rm proj}_{{\rm st}(A_r^{(0)})\cap S_{A_r}(A_r^{(0)},\epsilon,\gamma)}(A_r)$
is the projection onto the compact convex set ${\rm st}(A_r^{(0)})\cap S_{A_r}(A_r^{(0)},\epsilon,\gamma)$
in ${\bb R}^{r\times r}$,
and
 the maps ${\rm proj}_{{\rm st}(B_r^{(0)})}(B_r)$ and ${\rm proj}_{{\rm st}(C_r^{(0)})}(C_r)$ are the projections onto the closed convex sets ${\rm st}(B_r^{(0)})$ and ${\rm st}(C_r^{(0)})$, respectively.
That is,
for $i\neq j$, 
\begin{align*}
&\left({\rm proj}_{{\rm st}(A_r^{(0)})\cap S_{A_r}(A_r^{(0)},\epsilon,\gamma)}(A_r)\right)_{ij}\\
=&\begin{cases}
0 \quad\quad {\rm if} \quad (\tilde{A}_r)_{ij}\leq 0, \\
(\tilde{A}_r)_{ij}\quad {\rm if} \quad 0<(\tilde{A}_r)_{ij}\leq (\bar{A}_r)_{ij}, \\
 (\bar{A}_r)_{ij}\quad {\rm if} \quad (\tilde{A}_r)_{ij} > (\bar{A}_{r})_{ij},
\end{cases}
\end{align*}
and
\begin{align*}
&\left({\rm proj}_{{\rm st}(A_r^{(0)})\cap S_{A_r}(A_r^{(0)},\epsilon,\gamma)}(A_r)\right)_{ii}\\
=&\begin{cases}
-\gamma \quad\quad {\rm if} \quad (A_r)_{ii}< -\gamma, \\
(A_r)_{ii}\quad {\rm if} \quad -\gamma\leq (A_r)_{ii}\leq (\bar{A}_r)_{ii}, \\
 (\bar{A}_r)_{ii}\quad {\rm if} \quad (A_r)_{ii} > (\bar{A}_{r})_{ii},
\end{cases}
\end{align*}
where $\bar{A}_r$ is defined as \eqref{bar_Ar} and
\begin{align*}
\left(\tilde{A}_r\right)_{ij}
:=\left({\rm proj}_{{\rm st}(A_r^{(0)})}(A_r)\right)_{ij}
=
\begin{cases}
0\quad {\rm if}\quad (i,j)\in {\rm z} (A_r^{(0)}), \\
(A_r)_{ij}\quad {\rm otherwise}.
\end{cases}
\end{align*}
Moreover, for $M_r\in \{B_r, C_r\}$,
\begin{align*}
    &\left({\rm proj}_{{\rm st}(M_r^{(0)})}(M_r)\right)_{ij} \\
    =&
    \begin{cases}
    0\quad \quad\quad\, {\rm if}\quad (i,j)\in {\rm z} (M_r^{(0)}), \\
        0 \quad \quad\quad\, {\rm if}\quad (M_r)_{ij}< 0,\\
    (M_r)_{ij} \quad {\rm if}\quad (i,j)\not\in {\rm z} (M_r^{(0)})\,\,{\rm and}\,\, (M_r)_{ij}\geq 0,
\end{cases}
\end{align*}

\begin{algorithm*} 
\caption{Cyclic block projected gradient method.}         
\begin{algorithmic}[1]
\REQUIRE $(A,B,C)\in {\bb R}^{n\times n}\times {\bb R}^{n\times m}\times {\bb R}^{p\times n}$ in \eqref{original},
$(A_r^{(0)}, B_r^{(0)}, C_r^{(0)})\in {\bb R}^{r\times r}\times {\bb R}^{r\times m}\times {\bb R}^{p\times r}$ in \eqref{shokiten}, $\epsilon>0$, $\gamma>0$, $c>1$, $c_1>0$, $c_2>0$, and $k\leftarrow 0$.

\ENSURE $(A_r^{(k)},B_r^{(k)},C_r^{(k)})\in {\bb R}^{r\times r}\times {\bb R}^{r\times m}\times {\bb R}^{p\times r}$.

\REPEAT

\STATE Calculate $c_{A_r}:=c L_{A_r}(B_r^{(k)},C_r^{(k)})$, where $L_{A_r}(B_r,C_r)$ is defined in \eqref{Lip_const_Ar}.

\STATE $A_r^{(k+1)} = {\rm proj}_{{\rm st}(A_r^{(0)})\cap S_{A_r}(A_r^{(0)},\epsilon,\gamma)} \left(A_r^{(k)} - \nabla_{A_r}f(A_r^{(k)}, B_r^{(k)}, C_r^{(k)})/c_{A_r}\right)$.

\STATE Calculate $c_{B_r}:=c L_{B_r}(A_r^{(k+1)},C_r^{(k)})$, where $L_{B_r}(A_r,C_r)$ is defined in \eqref{Lip_B2}.

\STATE $B_r^{(k+1)} = {\rm proj}_{{\rm st}(B_r^{(0)})} \left(B_r^{(k)} - \nabla_{B_r}f(A_r^{(k+1)}, B_r^{(k)}, C_r^{(k)})/c_{B_r}\right)$.

\STATE Calculate $c_{C_r}:=c L_{C_r}(A_r^{(k+1)},B_r^{(k+1)})$, where $L_{C_r}(A_r,B_r)$ is defined in \eqref{Lip_C2}.

\STATE $C_r^{(k+1)} = {\rm proj}_{{\rm st}(C_r^{(0)})} \left(C_r^{(k)} - \nabla_{C_r}f(A_r^{(k+1)}, B_r^{(k+1)}, C_r^{(k)})/c_{C_r}\right)$.

\STATE $k\leftarrow k+1$.
\UNTIL{$(A_r^{(k)},B_r^{(k)},C_r^{(k)})$ is sufficiently close to a stationary point of Problem 1.}
\end{algorithmic}
\end{algorithm*}

In Algorithm 1, to calculate the gradients of $f$ in terms of $A_r$, $B_r$, and $C_r$,
we use the solutions $X$ and $Y$ to Sylvester equations \eqref{X} and \eqref{Y}.
If the original matrix $A$ is sparse,
we can use an efficient method whose computational complexity is considerably smaller than $O(n^3)$  for solving \eqref{X} and \eqref{Y},
as explained in Section 3 in \cite{benner2011sparse} and
Section 4 in \cite{simoncini2016computational}.
Note that even if $A$ is not sparse,
we can solve \eqref{X} and \eqref{Y} with the 
the computational costs $O(n^3)$ using
the Bartels--Stewart method
proposed in \cite{bartels1972solution}.

The following convergence property of Algorithm 1 can be easily proved using Theorems \ref{Thm1} and
\ref{main_Thm},
because Algorithm 1 is a special case of the proximal alternating linearized minimization proposed in
\cite{bolte2014proximal}.
That is, we can confirm that a slightly modified condition, which is needed to prove the convergence property, of Assumption 2 in \cite{bolte2014proximal} holds.

\begin{theorem} \label{Thm_convergence}
Suppose that $\{(A_r^{(k)}, B_r^{(k)}, C_r^{(k)})\}$ is a bounded, controllable, and observable sequence generated by Algorithm 1,
where we choose $c_1$ and $c_2$ such that the statement of Theorem \ref{main_Thm}
holds.
Then, 
 $\{(A_r^{(k)}, B_r^{(k)}, C_r^{(k)})\}$ converges to a stationary point of
Problem 1', that is, Problem 1.
\end{theorem}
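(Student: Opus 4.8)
The plan is to recognize Algorithm~1 as an instance of the proximal alternating linearized minimization (PALM) scheme of \cite{bolte2014proximal} applied to the splitting $h=f+g$ of Problem~1', and then to verify the hypotheses of the PALM convergence theorem. First I would identify the three variable blocks $(A_r,B_r,C_r)$ with the smooth coupling function $H:=f$ and the separable nonsmooth part $g$, whose three summands are the indicators of the sets ${\rm st}(A_r^{(0)})\cap S_{A_r}(A_r^{(0)},\epsilon,\gamma)$, ${\rm st}(B_r^{(0)})$, and ${\rm st}(C_r^{(0)})$. Because each nonsmooth term is the indicator of a nonempty closed convex set, its proximal map reduces to the Euclidean projection onto that set, and the three projections written explicitly before Algorithm~1 are exactly these prox-operators. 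Moreover, the step sizes $1/c_{A_r}$, $1/c_{B_r}$, $1/c_{C_r}$ with $c_{\cdot}=c\,L_{\cdot}$ and $c>1$ are precisely the PALM step sizes taken as the reciprocal of a fixed over-estimate of the relevant block Lipschitz modulus. Hence the iteration in Algorithm~1 coincides literally with PALM.

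Second, I would check the structural hypotheses of \cite{bolte2014proximal}. The three indicators are proper and lower semicontinuous because the underlying sets are nonempty (they contain $A_r^{(0)}$, $B_r^{(0)}$, $C_r^{(0)}$, the first by Theorem~\ref{Thm_key_set}) and closed, and $h$ is bounded below since $2f=\|G-G_r\|_{H^2}^2-\|G\|_{H^2}^2\geq-\|G\|_{H^2}^2$. The block Lipschitz continuity of $\nabla_{A_r}f$, $\nabla_{B_r}f$, $\nabla_{C_r}f$ is exactly the content of Theorems~\ref{main_Thm} and \ref{Thm1}, with moduli \eqref{Lip_const_Ar}, \eqref{Lip_B2}, and \eqref{Lip_C2}. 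Two small departures from the literal statement of Assumption~2 in \cite{bolte2014proximal} must be acknowledged here, and these constitute the ``slightly modified condition'' alluded to above: $f$ is smooth only on the open region of stable state matrices rather than on all of ${\bb R}^{r\times r}$, and the modulus $L_{A_r}$ is a Lipschitz bound valid only on the compact set $S_{A_r}(A_r^{(0)},\epsilon,\gamma)$. Both are harmless because ${\rm dom}\,g\subseteq S_{A_r}(A_r^{(0)},\epsilon,\gamma)\times{\bb R}^{r\times m}\times{\bb R}^{p\times r}$, so every iterate stays in $S_{A_r}(A_r^{(0)},\epsilon,\gamma)$---where, by Theorem~\ref{Thm_key_set}, the eigenvalues have real part at most $-\epsilon$---and the descent lemma is only ever applied to pairs of points lying in this set. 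The Kurdyka--\L{}ojasiewicz requirement holds because $f$ is a rational function of the entries of $(A_r,B_r,C_r)$ (the solutions $X,Y,P,Q$ of \eqref{X}--\eqref{Q} come from linear matrix equations whose coefficients are affine in the entries, hence are rational by Cramer's rule on the stability region), while the constraint sets are polyhedra; thus $h$ is semi-algebraic and therefore a KL function.

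Third---and this is where I expect the real work to lie---I would establish the uniform two-sided bounds $0<\lambda_i^-\leq L_i^{(k)}\leq\lambda_i^+<\infty$ on the block moduli along the generated sequence, which is the remaining part of Assumption~2. The upper bounds follow from boundedness of $\{(A_r^{(k)},B_r^{(k)},C_r^{(k)})\}$: on the compact set $S_{A_r}(A_r^{(0)},\epsilon,\gamma)$ the solution maps of the Lyapunov equations \eqref{P} and \eqref{Q} are continuous (the stability margin $\epsilon$ guarantees invertibility of the associated operators), so $\|P\|_{\rm F}$, $\|Q\|_{\rm F}$ and the factors $\|B_r\|_{\rm F}$, $\|C_r\|_{\rm F}$ in \eqref{Lip_const_Ar} stay bounded. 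The lower bounds are exactly what the controllability and observability hypotheses buy: controllability of each $(A_r^{(k)},B_r^{(k)})$ forces $P^{(k)}\succ0$ and hence $\|P^{(k)}\|_{\rm F}>0$, while observability of each $(A_r^{(k)},C_r^{(k)})$ forces $Q^{(k)}\succ0$ and hence $\|Q^{(k)}\|_{\rm F}>0$; together with the lower bounds on $\|B_r^{(k)}\|_{\rm F}$ and $\|C_r^{(k)}\|_{\rm F}$ that controllability and observability also impose, this yields positive lower bounds for $L_{C_r}^{(k)}=\|P^{(k)}\|_{\rm F}$, $L_{B_r}^{(k)}=\|Q^{(k)}\|_{\rm F}$, and $L_{A_r}^{(k)}$. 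The main obstacle is precisely this nondegeneracy: boundedness alone does not prevent the Gramians from approaching singularity, so the controllability and observability assumptions are genuinely needed to keep the step sizes bounded and to preserve a uniform sufficient-decrease constant; making the passage from ``$P^{(k)},Q^{(k)}\succ0$ for each $k$'' to ``$\inf_k\|P^{(k)}\|_{\rm F}>0$ and $\inf_k\|Q^{(k)}\|_{\rm F}>0$'' rigorous along the bounded sequence is the delicate step.

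Finally, with the (modified) Assumption~2 verified and the sequence assumed bounded, I would invoke the main convergence theorem of \cite{bolte2014proximal}: the sufficient-decrease property, the relative-error bound on the subgradients, and the KL inequality together give finite length $\sum_k\|(A_r^{(k+1)},B_r^{(k+1)},C_r^{(k+1)})-(A_r^{(k)},B_r^{(k)},C_r^{(k)})\|_{\rm F}<\infty$, so the whole sequence converges to a single limit point that is a critical point of $h$, i.e., satisfies \eqref{stationary_Pro1}. This is precisely a stationary point of Problem~1', equivalently of Problem~1, which completes the argument.
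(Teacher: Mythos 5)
Your proposal is correct and follows essentially the same route as the paper: the paper also proves Theorem \ref{Thm_convergence} by recognizing Algorithm 1 as a special case of the proximal alternating linearized minimization of \cite{bolte2014proximal} and verifying a slightly modified Assumption 2 there, with the block Lipschitz conditions supplied by Theorems \ref{Thm1} and \ref{main_Thm} and the two-sided moduli bounds supplied by the boundedness, controllability, and observability hypothesis on the sequence. Your write-up is in fact more explicit than the paper's terse verification—in particular, the KL property via semi-algebraicity and the passage from per-iterate positive definiteness of the Gramians to uniform lower bounds on the moduli are simply asserted, not argued, in the paper.
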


\begin{remark} \label{remark_line_search}
In Algorithm 1, the step sizes for $A_r$, $B_r$, and $C_r$ updates can be defined using the Lipschitz constants derived in Section \ref{Sec4} without a line-search.
This is practically important,
because the computational cost of the line search is high due to the need for {calculating} the solution to large-scale Sylvester equations \eqref{X} or \eqref{Y}.
\end{remark}

\section{Numerical Experiments} \label{Sec6}

In this section, we demonstrate
the effectiveness of Algorithm 1
{with comparisons to the reduction method based on the RALM proposed in \cite{misawa2022h}.}
In all numerical experiments, we used $c=1.1$, $c_1=c_2=1$, $\epsilon=10^{-6}$, and $\gamma =10^7$ in Algorithm 1.

To this end, we considered the 2-dimensional heat equation on $[0,d]^2$
\begin{align*}
    \frac{\partial \theta}{\partial t}(x,y,t) = a \left(\frac{\partial^2 \theta}{\partial x^2}(x,y,t)+ \frac{\partial^2 \theta}{\partial y^2}(x,y,t)\right),
\end{align*}
which has been used for the thermal analysis of a building brick \cite{alawadhi2008thermal}
    and a heated plate \cite{barron2014synchronization}, where $a$ is the thermal conductivity.
We set $d=10$ and $a=0.0241$.
In addition, the Dirichlet boundary conditions were used to specify the actuators.
The finite difference discretization on $[0,d]^2$ of step size $h=d/(K+1)$ resulted in
system \eqref{original} with $n=K^2$, $m=p=2$, and
\begin{align}
A &:= \beta \begin{pmatrix}
A_1                         & I &                                &  & \\
I& A_1                         & I & & \\
                             & \ddots                     & \ddots                    &\ddots & \\
                             &                                &  I & A_1 & I \\
 & & & I & A_1              
\end{pmatrix}, \label{A_example}\\
A_1 &: = \begin{pmatrix}
-4                         & 1 &                                &  & \\
1& -4                         & 1 & & \\
                             & \ddots                     & \ddots                    &\ddots & \\
                             &                                & 1 & -4 & 1 \\
 & & & 1 & -4                         
\end{pmatrix}\in {\bb R}^{K\times K}, \nonumber \\
B &:= \beta\begin{pmatrix}
1 & 0 \\
0 & 0 \\
\vdots & \vdots \\
0 & 1
\end{pmatrix},\,
C := \begin{pmatrix}
1 & 0 & \cdots & 0 \\
0 & 0 & \cdots & 1
\end{pmatrix}, \label{BC_example}
\end{align}
under the assumption that we can directly control and measure $x_1(t)$ and $x_{n}(t)$ in Fig.\,\ref{Fig_PDE}.
The symmetric matrix $A$ is negative define, because $A$ is a diagonally dominant matrix with negative diagonal elements
\cite{saad2003iterative}.
Here, the above $I$ denotes the $K\times K$ identity matrix,
and $\beta:=a/h^2$.

Throughout all numerical experiments, we reduced ASPN \eqref{original} with \eqref{A_example} and \eqref{BC_example} to initial reduced ASPN \eqref{reduced_initial} with the reduced state dimension $r=16$.
The reduced ASPN has the interconnection structure, illustrated in Fig.\,\ref{Fig_clustering}.
Here, each cluster denotes an aggregated state variable composed of $K^2/16$ original states.

\begin{figure}[t]
  \begin{minipage}{0.52\columnwidth}
\begin{center}
\includegraphics[width = 4.6cm]{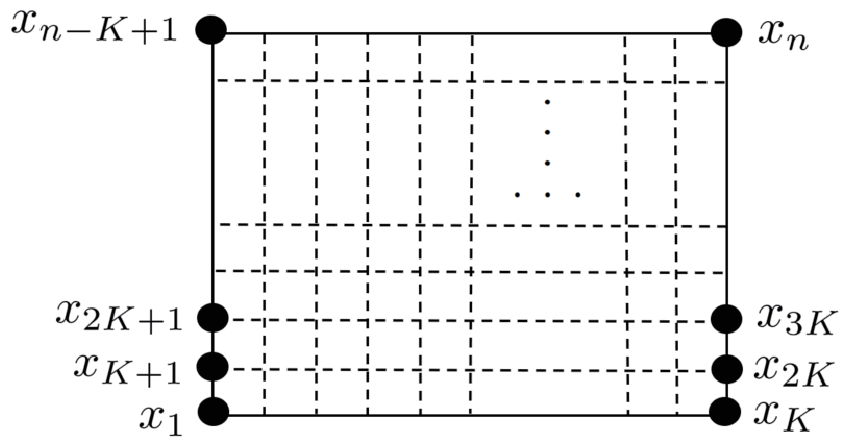}
\end{center}
\caption{Discretization of $[0,d]^2$.} 
\label{Fig_PDE}
\end{minipage}
  \begin{minipage}{0.45\columnwidth}
\begin{center}
\includegraphics[width = 3cm]{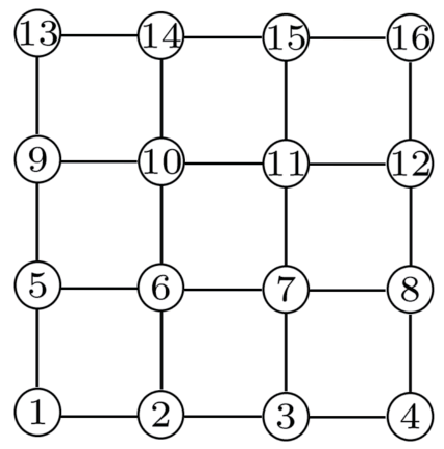} \\
\end{center}
\caption{Interconnection structure of reduced systems.}
\label{Fig_clustering}
\end{minipage}
\end{figure}

Table \ref{table1} shows the comparison of computational time between Algorithm 1 and {the RALM-based reduction method proposed in \cite{misawa2022h}}.
Here, the {iteration numbers of both methods were $10$.} 
The blank column for {the RALM-based reduction method } means that the calculation was not finished in {two days}.
The table indicates that even if the original state dimension $n$ is larger than $10^6$, Algorithm 1 can produce an ASPN with the interconnection structure in Fig.\,\ref{Fig_clustering} in a practical time period.
In contrast, we cannot expect that {the RALM-based reduction method} can produce a reduced system in a practical time period when $n$ is larger than {$1.6\times 10^5$}.

\begin{table}[t]
\caption{Computational time (in seconds) of Algorithm 1 and {the RALM-based reduction method \cite{misawa2022h}}. } \label{table1}
  \begin{center}
    \begin{tabular}{|c|c|c|c|} \hline
                 $n$ & $10^4$  & $1.6\times 10^5$ & $10^6$  \\ \hline 
      Algorithm 1    & $1.06\times 10^1$  & $2.52\times 10^2$  & $1.85\times 10^3$  \\ \hline 
      {RALM \cite{misawa2022h}}   &  {$1.59\times 10^3$}  & {$3.66\times 10^4$}  &  \\\hline 
    \end{tabular}
  \end{center}
\end{table}

{
Fig.\,\ref{Fig_objective} denotes the convergence behaviors of Algorithm 1 and the RALM-based reduction method
 when $n=10^4$.
After $10$ iterations, the proposed method for solving Problem 1 produced 
a better solution in terms of the $H^2$ norm compared with that of the RALM-based reduction method for solving Problem 0.
That is, although the feasible solution set of Problem 1 is narrower than that of Problem 0, this may not be an issue when we use the same initial point in Problems 0 and 1 due to the high non-convexity of Problem 0.
}

{Moreover, the RALM-based reduction method did not exactly produce an ASPN unlike Algorithm 1.
That is, the method could not preserve the interconnection structure unlike Algorithm 1. 
This means that the method generated an infeasible solution to Problem 0 at each iteration.  
The method may produce a feasible solution to Problem 0, if we set a sufficiently large iteration number or appropriately adjust hyper-parameters. However, in this case, we cannot obtain a reduced model in a practical time, as can be seen in Table \ref{table1}.}

\begin{figure}[t]
\begin{center}
\includegraphics[width = 8cm, height = 5cm]{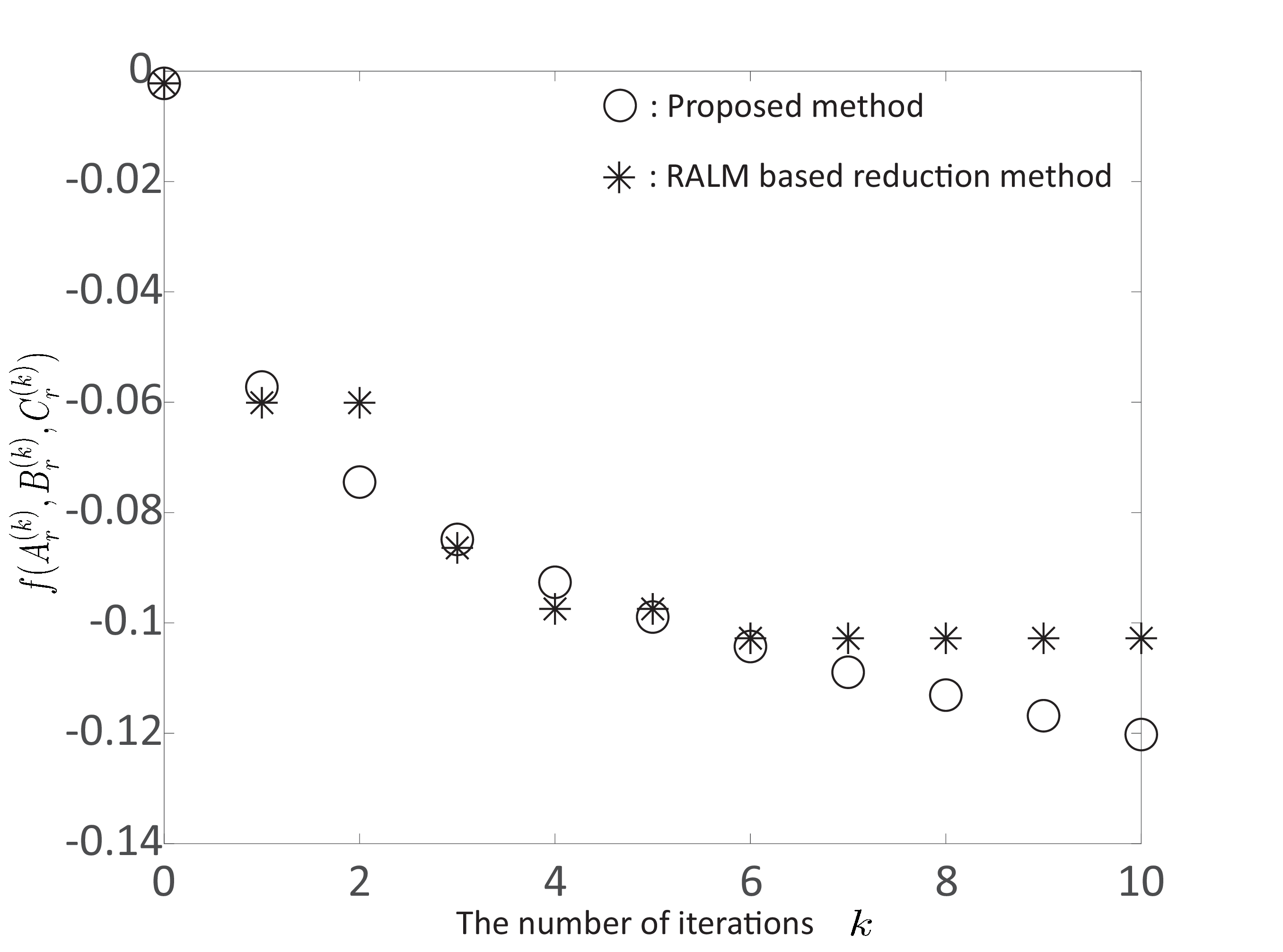}
\end{center}
\caption{{The comparison between Algorithm 1 and the RALM-based reduction method when $n=10^4$.}} \label{Fig_objective}
\end{figure}

\section{Concluding Remarks} \label{Sec7}
We proposed a reconstruction method, that preserves the stability, positivity, and original interconnection structure, for improving a reduced model generated using any reduction method by formulating a novel $H^2$ optimal model reduction problem with constraints.
To formulate the problem, we derived the set whose element is stable and Metzler.
In the proposed method, we used three Lipschitz constants, which analytically derived in this paper, of the gradients of our objective function to define the step sizes and guarantee the global convergence to a stationary point.
Moreover, in the numerical experiments, the proposed algorithm could generate a reduced model even if {the} original system {was} large-scale.

The proposed method in this paper for stable positive network systems \eqref{original} can be used to reduce semi-stable positive network systems.
In fact, because the output to semi-stable positive system \eqref{original} with $x(0)=0$ and $u(\tau)=\tilde{u}(\tau) e^{-\alpha(t-\tau)}$
is given by
    $y(t) = C\int_0^t \exp (A-\alpha {I_n})(t-\tau) B \tilde{u}(\tau) d\tau$.
Thus, instead of \eqref{relation_daiji}, for any $\alpha>0$,
we obtain
\begin{align*}
\sup_{t\geq 0}\|y(t)-y_r(t)\|_2 \leq \|\tilde{G}-\tilde{G}_r\|_{H^2}
\end{align*}
under $\|\tilde{u}\|_{L^2}\leq 1$,
where
$\tilde{G}$ is the transfer function of
\begin{align}
\begin{cases}
\dot{x}(t) = (A-\alpha I_n)x(t) + B\tilde{u}(t), \\
y(t) = Cx(t)
\end{cases} \label{original2}
\end{align}
and $\tilde{G}_r$ is the transfer function of system \eqref{reduced} with input $\tilde{u}$.
Even if positive system \eqref{original} is semi-stable,
the modified positive system \eqref{original2} is stable for any $\alpha>0$.
Thus, by using Algorithm 1 to \eqref{original2} with sufficiently small $\alpha >0$,
we can obtain reduced stable positive system \eqref{reduced} with input $\tilde{u}$.
To obtain a reduced semi-stable positive system,
the eigenvalue of $A_r$ should be shifted such that the largest eigenvalue of the modified $A_r$ is $0$.
Note that the shift can be performed, because Proposition \ref{Pro_general_nonnegative} in Appendix \ref{Ape_Perron_Frobenius} holds.

However, for a Laplacian dynamical system, which is a special class of semi-stable positive systems \cite{cheng2020clustering, cheng2020reduced, cheng2021model},
the above method cannot preserve the Laplacian dynamical structure, that is, the linear constraint of elements of $A$ matrix.
Thus, to reduce a large-scale Laplacian dynamical system to a small-scale Laplacian dynamical system, we need to modify our problem with the linear constraint.
In this case, we have to consider an adequate algorithm for the modified problem.
This is an interesting direction of future studies.


%


\section*{Acknowledgment}
This work was supported by Japan
Society for the Promotion of Science KAKENHI under Grant 20K14760. 

\appendix


\subsection{Proofs of Lemma \ref{Lem_Perron_Frobenius} and Theorem \ref{Thm_key_set}}

\subsubsection{Summaries of Perron--Frobenius theory} \label{Ape_Perron_Frobenius}

To prove Lemma \ref{Lem_Perron_Frobenius} and Theorem \ref{Thm_key_set},
we briefly summarize the Perron--Frobenius theory.

Let $A\in {\bb R}^{n\times n}$.
The spectral radius $\rho (A)$ of $A$ is defined as
$\rho (A):= \max \{ |\lambda|\,|\, \lambda \in \sigma (A)\}$,
where $\sigma (A)$ denotes the set of all eigenvalues of $A$.

As shown in Corollary 8.1.19 in \cite{horn2012matrix}, the following proposition holds.
\begin{proposition} \label{Pro_nonnegative_spectral}
Suppose that $A, B\in {\bb R}^{n\times n}$ are nonnegative.
If $A\leq B$, then
$\rho (A)\leq \rho(B)$ holds.
\end{proposition}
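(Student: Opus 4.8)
The plan is to reduce the spectral-radius inequality to a chain of norm inequalities by means of Gelfand's formula $\rho(M)=\lim_{k\to\infty}\|M^k\|^{1/k}$, which holds for every submultiplicative matrix norm $\|\cdot\|$. The strategy rests on two ingredients: first, that the entrywise order on nonnegative matrices is preserved under taking powers; and second, that one can select a submultiplicative norm that is monotone with respect to this order. Together these let me transport the hypothesis $A\le B$ into a comparison of spectral radii.

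First I would establish by induction on $k$ that $0\le A^k\le B^k$ for all $k\ge 1$. The base case is exactly the hypothesis $A\le B$. Assuming $0\le A^k\le B^k$, nonnegativity of all factors together with compatibility of the entrywise order with matrix multiplication gives
\begin{align*}
A^{k+1}=A\,A^k\le B\,A^k\le B\,B^k=B^{k+1},
\end{align*}
where the first inequality uses $A\le B$ and $A^k\ge 0$, and the second uses $A^k\le B^k$ and $B\ge 0$; nonnegativity of $A^{k+1}$ is immediate. This closes the induction.

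Next I would take the maximum absolute column-sum norm $\|M\|_1:=\max_j\sum_i|M_{ij}|$, which is submultiplicative, being an induced operator norm, and monotone on nonnegative matrices, since $0\le P\le Q$ forces $\sum_i P_{ij}\le\sum_i Q_{ij}$ for every column index $j$. Applying this monotonicity to the comparison $0\le A^k\le B^k$ yields $\|A^k\|_1\le\|B^k\|_1$, and hence $\|A^k\|_1^{1/k}\le\|B^k\|_1^{1/k}$ for all $k$. Letting $k\to\infty$ and invoking Gelfand's formula for both $A$ and $B$ gives $\rho(A)=\lim_{k\to\infty}\|A^k\|_1^{1/k}\le\lim_{k\to\infty}\|B^k\|_1^{1/k}=\rho(B)$, which is the claim.

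The only conceptually nonroutine point is that the spectral radius is not itself a norm, so its monotonicity cannot be read off directly from $A\le B$; it must instead be transported through the limiting characterization, which is precisely why the power comparison $0\le A^k\le B^k$ and a monotone submultiplicative norm are both needed. Everything else is the routine verification that the entrywise order is compatible with matrix products and with the chosen norm, so I expect no genuine obstacle beyond correctly invoking Gelfand's formula.
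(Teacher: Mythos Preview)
Your argument is correct. The paper does not supply its own proof of this proposition; it simply records the statement and cites Corollary~8.1.19 of Horn and Johnson's \emph{Matrix Analysis}. Your route through Gelfand's formula, combined with the inductive comparison $0\le A^k\le B^k$ and a monotone submultiplicative norm, is a standard and fully self-contained way to obtain the result, so there is nothing substantive in the paper to compare against beyond the external reference.
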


As shown in Chapter 8 in \cite{meyer2000matrix}, the following proposition holds for general nonnegative matrices.
\begin{proposition} \label{Pro_general_nonnegative}
If $A\in {\bb R}^{n\times n}$ is nonnegative, $\rho(A)$ is an eigenvalue of $A$.
\end{proposition}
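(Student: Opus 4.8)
The plan is to realize $\rho(A)$ as a limit of Perron eigenvalues of strictly positive matrices, for which the conclusion is the elementary Perron theorem, and then to recover an eigenvector of $A$ by a compactness argument, using only Proposition \ref{Pro_nonnegative_spectral} as an external input. The single fact I would take for granted is the base case: for a strictly positive matrix $B>0$, the classical Perron theorem (Chapter 8 of \cite{meyer2000matrix}) guarantees that $\rho(B)$ is an eigenvalue of $B$ admitting a strictly positive eigenvector.

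First I would fix $\epsilon>0$ and set $A_\epsilon := A+\epsilon E$, where $E\in {\bb R}^{n\times n}$ is the all-ones matrix, so that $A_\epsilon>0$. By the base case there is a vector $v_\epsilon$ with $v_\epsilon>0$, $\|v_\epsilon\|_2=1$, and $A_\epsilon v_\epsilon=\rho(A_\epsilon)v_\epsilon$. Since $A$ and $A_\epsilon$ are nonnegative with $A\leq A_\epsilon$, Proposition \ref{Pro_nonnegative_spectral} gives $\rho(A)\leq\rho(A_\epsilon)$; likewise $A_{\epsilon'}\leq A_\epsilon$ for $0<\epsilon'<\epsilon$ yields $\rho(A_{\epsilon'})\leq\rho(A_\epsilon)$, so $\epsilon\mapsto\rho(A_\epsilon)$ is nondecreasing. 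Hence the limit $r^{\ast}:=\lim_{\epsilon\to 0^+}\rho(A_\epsilon)$ exists and satisfies $r^{\ast}\geq\rho(A)\geq 0$.

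The decisive step is to pass to the limit along a subsequence. Because the unit sphere in ${\bb R}^n$ is compact, I can choose $\epsilon_k\to 0$ with $v_{\epsilon_k}\to v$ for some $v$ with $\|v\|_2=1$ and $v\geq 0$. Letting $k\to\infty$ in $A_{\epsilon_k}v_{\epsilon_k}=\rho(A_{\epsilon_k})v_{\epsilon_k}$, and using $A_{\epsilon_k}\to A$ together with $\rho(A_{\epsilon_k})\to r^{\ast}$, gives $Av=r^{\ast}v$. Thus $r^{\ast}$ is an eigenvalue of $A$, so $r^{\ast}=|r^{\ast}|\leq\rho(A)$; combined with $r^{\ast}\geq\rho(A)$ this forces $r^{\ast}=\rho(A)$. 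Since $v\neq 0$, $\rho(A)$ is an eigenvalue of $A$ (with a nonnegative eigenvector as a byproduct), as claimed.

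The main obstacle is ensuring that the perturbed eigenvectors do not degenerate in the limit and that the limiting eigenvalue is pinned to $\rho(A)$ rather than to some eigenvalue of smaller modulus. The normalization $\|v_\epsilon\|_2=1$ together with sphere compactness handles the former, while the two-sided bound $\rho(A)\leq r^{\ast}$ and $r^{\ast}\leq\rho(A)$ --- the lower inequality from the monotonicity in Proposition \ref{Pro_nonnegative_spectral} and the upper one from $r^{\ast}$ being an eigenvalue of $A$ --- handles the latter, and notably lets me avoid any separate appeal to continuity of the spectrum.
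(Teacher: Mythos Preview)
Your argument is correct. The perturbation $A_\epsilon=A+\epsilon E$, the monotonicity of $\rho(A_\epsilon)$ in $\epsilon$ via Proposition~\ref{Pro_nonnegative_spectral}, the compactness extraction of a limiting unit eigenvector, and the two-sided squeeze $\rho(A)\le r^\ast\le\rho(A)$ are all sound; the last inequality follows exactly as you say, because $r^\ast$ is realized as an eigenvalue of $A$ and hence cannot exceed $\rho(A)$ in modulus.

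As for comparison with the paper: the paper does not supply its own proof of Proposition~\ref{Pro_general_nonnegative}. It is stated in the appendix as part of a summary of Perron--Frobenius theory and is simply referenced to Chapter~8 of \cite{meyer2000matrix}. Your perturbation-and-compactness argument is in fact one of the standard textbook routes to this result (and is close in spirit to the treatment in that reference), so there is no meaningful methodological difference to discuss. If anything, your write-up is slightly more self-contained than a bare citation, since you isolate precisely which external facts you need: the Perron theorem for strictly positive matrices and the spectral-radius monotonicity of Proposition~\ref{Pro_nonnegative_spectral}.
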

Note that the spectral radius of any real matrix is not always an eigenvalue of the matrix.

A nonnegative matrix $A\in {\bb R}^{n\times n}$ is termed irreducible if the graph corresponding to $A$ is strongly connected.

The following proposition is a part of Perron--Frobenius theory, as shown in Chapter 8 in \cite{meyer2000matrix}.

\begin{proposition} \label{Pro_Perron_Frobenius}
Suppose that $A\in {\bb R}^{n\times n}$  is a nonnegative irreducible matrix.
The following statements hold.
\begin{enumerate}
\item 
The spectral radius $\rho (A)$ of $A$ is positive and is an algebraically simple eigenvalue of $A$.

\item There are the unique positive vectors $x, y\in {\bb R}^n$ such that
\begin{align}
Ax = \rho(A) x,\quad y^{\top}A= \rho(A) y^{\top},\quad y^{\top}x=1. \label{Pro_PF_daiji}
\end{align}

\item 
There are no nonegative right and left eigenvectors for $A$ except for positive multiples of $x$ and $y$ in \eqref{Pro_PF_daiji}.

\end{enumerate}
\end{proposition}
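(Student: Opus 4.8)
The plan is to reduce the irreducible case to the classical Perron theorem for strictly positive matrices, prove the latter, and then transfer every conclusion back through the polynomial relation between the two spectra. The bridge is the elementary but crucial observation that, since irreducibility of $A$ means its digraph is strongly connected, for every ordered pair $(i,j)$ there is a walk of length at most $n-1$ from $i$ to $j$; reading the $(i,j)$ entry of $(I+A)^{n-1}$ as a sum over such walks with nonnegative weights, at least one term is positive, so $B:=(I+A)^{n-1}>0$ entrywise. Because $B=p(A)$ with $p(t)=(1+t)^{n-1}$, $A$ and $B$ commute and share eigenvectors, an eigenvalue $\mu$ of $A$ produces the eigenvalue $(1+\mu)^{n-1}$ of $B$, and the spectral mapping theorem for multiplicities gives that the algebraic multiplicity of any $\lambda\in\sigma(B)$ equals the sum of the algebraic multiplicities of the $\mu\in\sigma(A)$ with $p(\mu)=\lambda$.

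First I would prove Perron's theorem for a strictly positive $B>0$. By Proposition \ref{Pro_general_nonnegative}, $\rho(B)$ is an eigenvalue; if $Bz=\rho(B)z$, the entrywise triangle inequality gives $\rho(B)\,|z|\le B|z|$, and were this strict in some coordinate, applying $B>0$ would yield $Bu>\rho(B)u$ for $u:=B|z|>0$, producing a vector whose Collatz--Wielandt ratio exceeds $\rho(B)$ and contradicting $\rho(B)=\max_{x\ge 0,\,x\ne 0}\min_{i:x_i>0}(Bx)_i/x_i$. Hence $B|z|=\rho(B)|z|$ and, as $B|z|>0$, the Perron vector $x:=|z|/\||z|\|_{1}>0$. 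Maximality and the absence of another eigenvalue of modulus $\rho(B)$ follow from the same inequality paired with a positive left eigenvector $y>0$ (obtained by applying the argument to $B^\top$): any $Bw=\nu w$ gives $|\nu|\,y^\top|w|\le y^\top B|w|=\rho(B)\,y^\top|w|$ with $y^\top|w|>0$. Geometric simplicity: if $x'$ is another eigenvector for $\rho(B)$, choose $t$ so that $x-tx'\ge 0$ has a zero coordinate; it is again a nonnegative eigenvector, and $B(x-tx')=\rho(B)(x-tx')$ with $B>0$ forces it to be strictly positive or zero, hence zero. Algebraic simplicity: a Jordan chain $(B-\rho(B)I)\tilde w=x$ would give $0=(\,y^\top(B-\rho(B)I)\,)\tilde w=y^\top x>0$, a contradiction. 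Finally any nonnegative eigenvector $z\ge 0$ satisfies $Bz>0$, hence is positive and, by geometric simplicity, a positive multiple of $x$; this settles part 3 for $B$.

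Then I would transfer everything to $A$. The Perron vector $x>0$ spans the one-dimensional $\rho(B)$-eigenspace, which $A$ preserves, so $Ax=\mu x$; since $x>0$ and $A\ge 0$ we get $\mu\ge 0$ and $(1+\mu)^{n-1}=\rho(B)$. The absolute-value comparison applied directly to $A$ with a positive left eigenvector of $A$ (built the same way, $A^\top$ being irreducible nonnegative) shows $\mu=\rho(A)$, and $\rho(A)>0$ because strong connectivity puts a positive entry in each row of $A$, forcing $Ax>0$ and hence $\mu>0$. The equality analysis $|1+\nu|\le 1+|\nu|\le 1+\rho(A)$ shows that the only $\nu\in\sigma(A)$ with $p(\nu)=\rho(B)$ is $\nu=\rho(A)$, so the multiplicity identity forces the algebraic multiplicity of $\rho(A)$ in $A$ to equal that of $\rho(B)$ in $B$, namely one; this is part 1. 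Positivity of $x$ and $y$ with one-dimensionality of the eigenspaces yields uniqueness up to scaling and $y^\top x>0$, normalized to $y^\top x=1$, giving part 2. For part 3, any nonnegative eigenvector of $A$ is a nonnegative eigenvector of $B$, hence a positive multiple of $x$ with eigenvalue forced to $\rho(A)$; the left statement is symmetric.

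The main obstacle is algebraic (as opposed to merely geometric) simplicity: ruling out a Jordan chain is what genuinely requires the positive left eigenvector, and carrying it through the change of variables $\mu\mapsto(1+\mu)^{n-1}$ demands the careful equality-case analysis of the triangle inequality to guarantee that $\rho(A)$ is the unique $A$-eigenvalue lying over $\rho(B)$; without that uniqueness the spectral-mapping count would not transfer simplicity back to $A$. The remaining steps are routine manipulations of nonnegative vectors.
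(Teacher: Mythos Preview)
The paper does not actually prove Proposition~\ref{Pro_Perron_Frobenius}; it is stated as a classical fact and attributed to Chapter~8 of Meyer's \emph{Matrix Analysis and Applied Linear Algebra}, with no argument given beyond that citation. Your plan is correct and is in fact one of the standard textbook routes to Perron--Frobenius: pass from the irreducible nonnegative matrix $A$ to the strictly positive matrix $B=(I+A)^{n-1}$, prove Perron's theorem for $B$ via the Collatz--Wielandt extremal characterisation together with a left-eigenvector/Jordan-chain contradiction for algebraic simplicity, and then transfer every conclusion back through the spectral mapping $\mu\mapsto(1+\mu)^{n-1}$, using the equality case of $|1+\nu|\le 1+|\nu|\le 1+\rho(A)$ to pin down that $\rho(A)$ is the unique preimage of $\rho(B)$. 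This is essentially the argument one finds in Meyer and in Horn--Johnson, so you have reconstructed the proof the paper is citing rather than offered an alternative to anything in the paper itself.

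One minor caveat: your justification of $\rho(A)>0$ via ``strong connectivity puts a positive entry in each row of $A$'' tacitly assumes $n\ge 2$; for $n=1$ the claim rests on the usual convention that the $1\times 1$ zero matrix is not deemed irreducible.
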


\subsubsection{Proof of Lemma \ref{Lem_Perron_Frobenius}} \label{Ape_proof_Lemma}

There exists $\alpha>0$ such that $A_r^{(0)}+\alpha I_r$ is nonnegative and irreducible, because we have assumed that $A_r^{(0)}$ is Metzler and irreducible. Thus, 
1) of Proposition \ref{Pro_Perron_Frobenius} in Appendix \ref{Ape_Perron_Frobenius}
 implies that the spectral radius $\rho$ of $A_r^{(0)}+\alpha I_r$ is an algebraically simple
 eigenvalue of $A_r^{(0)}+\alpha I_r$.
Thus, $\mu_1 := \rho- \alpha$ is a real number and \eqref{eigen_ineq} holds.
Here, $\mu_1<0$ follows from the assumption that $A_r^{(0)}$ is stable.
Moreover, 2) of Proposition \ref{Pro_Perron_Frobenius} implies that 
  the right eigenvector $v_1$ and left eigenvector $w_1$ corresponding to $\mu_1$ of $A_r^{(0)}$ are positive vectors, and \eqref{w_normalization} holds.
\qed


\subsubsection{Proof of Theorem \ref{Thm_key_set}} \label{Ape_proof_Theorem}

By the definition of $S_{A_r}(A_r^{(0)},\epsilon,\gamma)$, each matrix in $S_{A_r}(A_r^{(0)},\epsilon,\gamma)$ is a Metzler matrix.
The statement $A_r^{(0)}\in S_{A_r}(A_r^{(0)},\epsilon,\gamma)$ is also obvious. In fact, it follows
from the nonnegativity of $v_1 w_1^{\top}$ and $\mu_1+\epsilon\leq 0$ that 
$A_r^{(0)} \leq \bar{A}_r$
and $-\gamma I_r \leq A_r^{(0)}$ holds by the assumption of $-\gamma I_n \leq A_r^{(0)}$.

To show that each matrix in $S_{A_r}(A_r^{(0)},\epsilon,\gamma)$ is stable,
we note that Proposition \ref{Pro_nonnegative_spectral} in Appendix \ref{Ape_Perron_Frobenius} yields
\begin{align}
\rho(A_1) \leq \rho ({A}_2), \label{prove_stable}
\end{align}
for any $A_r\in S_{A_r}(A_r^{(0)},\epsilon,\gamma)$,
where 
$A_1:=A_r + \gamma I_r$ and $A_2:=\bar{A}_r+\gamma I_r$.
From Proposition \ref{Pro_general_nonnegative} in Appendix \ref{Ape_Perron_Frobenius},
 $\rho (A_1)$ is an eigenvalue of $A_1$.
Thus, 
\begin{align}
\mu := \rho(A_1)-\gamma \label{prove_stable2}
\end{align}
 is an eigenvalue with the largest real part of $A_1$. 
It follows from \eqref{prove_stable} and \eqref{prove_stable2} that
\begin{align}
\mu \leq \rho (A_2) -\gamma. \label{prove_stable3}
\end{align}
Moreover, 
\begin{align}
\rho (A_2) = -\epsilon + \gamma. \label{prove_stable4}
\end{align}
This follows from 
\begin{align}
A_2 v_1 = (-\epsilon + \gamma) v_1, \label{prove_stable5}
\end{align}
where we used the definitions of $A_2$, $\bar{A}_r$, and \eqref{eigen_ineq}.
In fact, from \eqref{prove_stable5}
and $v_1>0$, 2) and 3) in Proposition \ref{Pro_Perron_Frobenius} in Appendix \ref{Ape_Perron_Frobenius} imply \eqref{prove_stable4}.
Thus, by combining \eqref{prove_stable3} and \eqref{prove_stable4},
we obtain
$\mu \leq -\epsilon$.
This means that real parts of all the eigenvalues of any $A_r\in S_{A_r}(A_r^{(0)},\epsilon,\gamma)$ are less than or equal to $-\epsilon<0$, and thus any $A_r\in S_{A_r}(A_r^{(0)},\epsilon,\gamma)$ is stable.
This completes the proof. \qed

\ifCLASSOPTIONcaptionsoff
  \newpage
\fi



\bibliographystyle{IEEEtran}
\bibliography{bib.bib}




%





\end{document}